\newtheorem{theorem}{Theorem}[section]
\newtheorem{corollary}[theorem]{Corollary}
\newtheorem{lemma}[theorem]{Lemma}
\newtheorem{conjecture}[theorem]{Conjecture}
\theoremstyle{remark}
\theoremstyle{definition}
\date{}
\begin{document}
\title{On a conjecture of spectral extremal problems\thanks{This research was partially supported by  the National Nature Science Foundation of China (Nos. 11871329, 11971298)}}
\author {Jing Wang, \, Liying Kang\thanks{\em Corresponding author.\newline  Email address: lykang@shu.edu.cn (L. Kang),  wj517062214@163.com (J. Wang),  xys16720018@163.com (Y. Xue)}, \, Yisai Xue \\
{\small Department of Mathematics, Shanghai University,
Shanghai 200444, P.R. China}}

\maketitle

\vspace{-0.5cm}

\begin{abstract}For a  simple graph $F$, let $\mathrm{Ex}(n, F)$ and $\mathrm{Ex_{sp}}(n,F)$ denote the set of graphs with the maximum number of edges and the set of graphs with the maximum spectral radius in an $n$-vertex graph without any copy of the graph $F$, respectively.
The Tur\'an graph $T_{n,r}$ is the complete $r$-partite graph on $n$ vertices where its part sizes are as equal as possible.
Cioab\u{a}, Desai and Tait [The spectral radius of graphs with no odd wheels, European J. Combin., 99 (2022) 103420]  posed the following conjecture: Let $F$ be any graph such that the graphs in $\mathrm{Ex}(n,F)$ are Tur\'{a}n graphs plus $O(1)$ edges. Then $\mathrm{Ex_{sp}}(n,F)\subset \mathrm{Ex}(n,F)$ for sufficiently large $n$.
In this paper we consider the graph $F$ such that the graphs in $\mathrm{Ex}(n, F)$ are obtained from $T_{n,r}$ by adding $O(1)$ edges, and prove that if $G$ has the maximum spectral radius among all $n$-vertex graphs not containing $F$, then $G$ is a member of $\mathrm{Ex}(n, F)$  for $n$ large enough. Then Cioab\u{a}, Desai and Tait's  conjecture is completely solved.
 \end{abstract}

{{\bf Key words:}   Spectral radius; Spectral extremal graph; Tur\'an graph. }

\section{Introduction}

Let $F$ be a simple graph. A graph $G$ is $F$-free if there is no subgraph of $G$ isomorphic to $F$. The Tur\'an type extremal problem is to determine the maximum number of edges in a graph on $n$ vertices that is $F$-free, and the maximum number of edges is called the {\sl Tur\'an number}, denoted by $\mathrm{ex}(n,F)$.
Such a graph with $\mathrm{ex}(n, F)$ edges is called an {\sl extremal graph} for $F$ and we denote by $\mathrm{Ex}(n, F)$ the set of all extremal graphs on $n$ vertices for $F$. The {\sl Tur\'{a}n graph} is the complete $r$-partite graph on $n$ vertices where each partite set has either $\lfloor\frac{n}{r}\rfloor$ or $\lceil\frac{n}{r}\rceil$ vertices and the edge set consists of all pairs joining distinct parts, denoted by $T_{n,r}$. The well-known Tur\'{a}n Theorem \cite{Turan41} states that the extremal graph corresponding to Tur\'{a}n number $\mathrm{ex}(n,K_{r+1})$ is $T_{n,r}$, i.e. $\mathrm{ex}(n,K_{r+1})=e(T_{n,r})$. Erd\H{o}s, Stone and Simonovits \cite{ES46,ES66} presented the following result
 \begin{equation} \label{eqESS}
  \mathrm{ex}(n,F) = \left( 1- \frac{1}{\chi (F) -1}  \right)
  \frac{n^2}{2} + o(n^2),
  \end{equation}
 where $\chi (F)$ is the vertex-chromatic number of $F$. There are lots of researches on Tur\'{a}n type extremal problems (such as \cite{Bollobas78,Erdos95,Chen03,Keevash11,Sim13,FS13}).

In this paper we focus on spectral analogues of the Tur\'{a}n type problem for graphs, which was proposed by Nikiforov \cite{NikiforovLAA10}. The spectral Tur\'{a}n type problem is to determine the maximum spectral radius instead of the number of edges among all $n$-vertex $F$-free graphs. The graph which attains the maximum spectral radius is called a spectral extremal graph. We denote by $\mathrm{Ex_{sp}}(n,F)$ the set of all spectral extremal graphs for $F$. Researches of the spectral Tur\'{a}n type problem have drawn increasing extensive intersect (see \cite{NikiforovTuran,BG09,FiedlerNikif,NikifSurvey,ZW12}). Nikiforov \cite{Nikiforov07} showed that if $G$ is a $K_{r+1}$-free graph on $n$ vertices, then $\lambda (G)\le \lambda (T_{n,r})$, with equality if and only if $G=T_{n,r}$.  This implies that if $G$ attains the maximum spectral radius over all $n$-vertex $K_{r+1}$-free graphs for sufficiently large $n$, then $G\in \mathrm{Ex}(n,K_{r+1})$. Cioab\u{a}, Feng,  Tait and  Zhang \cite{CFTZ20} proved that the spectral extremal graph for $F_k$ belongs to $\mathrm{Ex}(n,F_k)$, where $F_k$ is the graph consisting of $k$ triangles which intersect in exactly one common vertex. In addition,   Chen, Gould, Pfender and Wei \cite{Chen03}
 proved that
 $\mathrm{ex}(n,F_{k,r+1})=e(T_{n,r})+O(1)$, where $F_{k,r+1}$ is the graph consisting of $k$ copies of $K_{r+1}$ which intersect in a single vertex. Naturally, Cioab\u{a}, Desai and Tait \cite{CDT21} raised the following conjecture.
\begin{conjecture}[Cioab\u{a} et al. \cite{CDT21}] \label{conj}
Let $F$ be any graph such that the graphs in $\mathrm{Ex}(n,F)$ are Tur\'{a}n graphs plus $O(1)$ edges. Then $\mathrm{Ex_{sp}}(n,F)\subset \mathrm{Ex}(n,F)$ for sufficiently large $n$.
\end{conjecture}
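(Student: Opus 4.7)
The plan is to follow the now-standard stability-plus-structure framework for spectral Tur\'an problems. Let $G \in \mathrm{Ex_{sp}}(n,F)$ and let $G^{*} \in \mathrm{Ex}(n,F)$, so by hypothesis $G^{*}$ is obtained from $T_{n,r}$ by adding $O(1)$ edges, where $\chi(F)=r+1$ (forced by \eqref{eqESS} and the assumption on $\mathrm{Ex}(n,F)$). Since $T_{n,r}\subseteq G^{*}$, the spectral extremality of $G$ gives the lower bound
\[
\lambda(G)\;\ge\;\lambda(G^{*})\;\ge\;\lambda(T_{n,r})\;=\;\left(1-\tfrac{1}{r}\right)n-O(1).
\]
Let $x$ be the Perron eigenvector of $G$ normalized so that $x_{z}=\max_{v}x_{v}=1$.

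First I would invoke a spectral stability result (in the spirit of Nikiforov, using the assumption on $\mathrm{Ex}(n,F)$ together with $2e(G)\ge\lambda(G)^{2}$) to conclude that for every $\varepsilon>0$, the graph $G$ can be turned into $T_{n,r}$ by editing at most $\varepsilon n^{2}$ edges. Combining this approximate structure with the two-step eigenvalue equation $\lambda(G)^{2}x_{v}=\sum_{u\sim v}\sum_{w\sim u}x_{w}$, I would then show that $\delta(G)\ge(1-1/r)n-o(n)$ and that every Perron entry satisfies $x_{v}\ge 1-o(1)$, so $G$ is nearly regular with almost uniform eigenvector weight. This yields an approximate partition $V_{1},\dots,V_{r}$ with $|V_{i}|=n/r+o(n)$ and $o(n^{2})$ edges inside the parts.

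Next I would upgrade this approximate partition to an exact one. Using a local switching argument comparing $\lambda(G)$ with $\lambda(G-uv+uv')$ whenever $v'\notin N(u)$, and exploiting $x_{v}\approx 1$, I would force each $|V_{i}|$ to lie within $O(1)$ of balanced and bound both the number of missing edges across parts and the number of edges inside parts by $O(1)$. At this stage $G$ differs from $T_{n,r}$ by only $O(1)$ edge modifications, so $G$ is a Tur\'an-plus-$O(1)$ graph. Being $F$-free, $e(G)\le \mathrm{ex}(n,F)=e(T_{n,r})+O(1)$. The final step is to show that any edge deficit decreases $\lambda$ strictly: by the Rayleigh quotient and the fact that all Perron entries equal $1-o(1)$, adding or deleting a single edge changes $\lambda(G)$ by $\Theta(1/n)$, while second-order eigenvector corrections contribute only $O(1/n^{2})$. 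Hence $e(G)<\mathrm{ex}(n,F)$ would force $\lambda(G)<\lambda(G^{*})$, contradicting spectral extremality, so $G\in\mathrm{Ex}(n,F)$.

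The main obstacle will be the final quantitative comparison: because both $G$ and $G^{*}$ sit in an $O(1)$-edge neighborhood of $T_{n,r}$, their spectral radii differ only in the $\Theta(1/n)$-order term, so one needs sharp control of Perron entries precisely at the $O(1)$ locations where $G$ and $G^{*}$ differ. A secondary difficulty is that the hypothesis specifies the \emph{shape} of $\mathrm{Ex}(n,F)$ but not which particular extra edges appear; the comparison must therefore work uniformly over all placements of the $O(1)$ deviation edges (intra-part versus missing inter-part, and across all choices of part), which I expect to handle by case analysis keyed to where these edges sit relative to the partition and to the Perron vector.
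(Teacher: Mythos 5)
Your overall framework matches the paper's: stability gives an approximate $r$-partition, a sequence of structural lemmas tightens it to a Tur\'an-like partition with $O(1)$ exceptional edges, eigenvector entries are shown to be $1-O(1/n)$, and a Rayleigh-quotient comparison against a member of $\mathrm{Ex}(n,F)$ finishes. You also correctly flag the crux: both $G$ and $G^*$ live within $O(1)$ edges of $T_{n,r}$, so the decisive comparison happens in the $\Theta(1/n)$ term and requires the Perron-entry bound $x_v\ge 1-O(1/n)$, which is indeed how the paper closes the argument.

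However, there is a genuine gap in your balancing step. You propose to ``force each $|V_i|$ to lie within $O(1)$ of balanced'' via a local switching argument comparing $\lambda(G)$ with $\lambda(G-uv+uv')$. Two problems: first, a single edge swap does not change the partition sizes, and any genuine vertex transfer from a large part $V_i$ to a small part $V_j$ increases $e(G)$ by roughly $|V_i|-|V_j|-1\ge 1$, which can push the edge count above $\mathrm{ex}(n,F)$ and destroy $F$-freeness, so the swap is not a legitimate comparison graph; second, ``within $O(1)$ of balanced'' is not strong enough --- the final comparison needs $|n_i-n_j|\le 1$ exactly, since even a defect of $2$ makes $\lambda(K)$ fall short of $\lambda(T_{n,r})$ by $\Theta(1/n)$, which would swallow the $\Theta(1/n)$ margin you are trying to exploit. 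The paper handles balance (Lemma~\ref{balance}) by an entirely different, algebraic route: using the characteristic-equation identity (\ref{eq0}) it shows that if some $n_{i_0}-n_{j_0}\ge 2$ then $\lambda(T_{n,r})-\lambda(K)\ge c_1/n$ (Claim~1), while the Rayleigh bound with $x_v\ge 1-O(1/n)$ together with $e(G_{\mathrm{in}})-e(G_{\mathrm{out}})\le a$ forces $\lambda(T_{n,r})-\lambda(K)\le c_2/n^2$ (Claim~2); these contradict for large $n$. You would need to replace the switching idea with something of this kind. A secondary omission is that you never say how $F$-freeness of the modified graphs is certified when you delete a vertex and reattach it along $N(z)$; the paper does this repeatedly via the inclusion--exclusion common-neighbor bound (Lemma~\ref{inclusion exclusion lemma}) to produce a replacement vertex with the same adjacencies, and without some such device the switching comparisons are not valid.
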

The results of Nikiforov \cite{Nikiforov07}, Cioab\u{a}, Feng,  Tait and  Zhang \cite{CFTZ20}, Li and Peng \cite{Yongtao21}, and  Desai, Kang, Li, Ni, Tait and Wang \cite{DKLNTW} tell us that  Conjecture \ref{conj} holds for $K_{r+1}$, $F_k$, $H_{s,k}$ and $F_{k,r}$, where $H_{s,k}$ is the graph defined by intersecting $s$ triangles and $k$ odd cycles of length at least $5$ in exactly one common vertex.  In this paper, we shall prove the following theorem which confirms Conjecture \ref{conj}.

\begin{theorem} \label{main result}
Let $r\geq 2$ be an integer, and $F$ be any graph such that the graphs in $\mathrm{Ex}(n,F)$ are obtained from $T_{n,r}$ by adding $O(1)$ edges. For sufficiently large $n$, if $G$ has the maximal spectral radius over all $n$-vertex $F$-free graphs, then
$$G \in \mathrm{Ex}(n, F).$$
\end{theorem}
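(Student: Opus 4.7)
The plan is to follow the standard ``lower bound by test graph + stability + eigenvector refinement + local switching'' template, adapted to the general hypothesis on $\mathrm{Ex}(n,F)$. Let $G$ be a spectral extremal $n$-vertex $F$-free graph, with Perron eigenvector $\mathbf{x}$ normalized so that $\max_v x_v = 1$, and let $z$ achieve this maximum. Since every graph in $\mathrm{Ex}(n,F)$ contains $T_{n,r}$ as a spanning subgraph, the Tur\'an graph $T_{n,r}$ is itself $F$-free, so by taking $T_{n,r}$ as a test graph one obtains $\lambda(G) \geq \lambda(T_{n,r}) = (1 - 1/r)n - O(1)$. On the other hand, the $F$-freeness of $G$ gives $e(G) \leq \mathrm{ex}(n,F) = e(T_{n,r}) + C$ for some absolute constant $C$.

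Next I would invoke the Erd\H{o}s--Simonovits stability theorem (supported by \eqref{eqESS} and the observation that $\chi(F) = r+1$) to conclude that $G$ can be turned into $T_{n,r}$ by changing at most $o(n^2)$ edges. The eigenvector equation at $z$ then forces $\sum_{v \sim z} x_v \geq (1-1/r)n - O(1)$, which (since $x_v \leq 1$) leaves no room for $z$ to miss many vertices outside its intended part; iterating this argument at every vertex of large $\mathbf{x}$-coordinate yields a uniform degree lower bound $\delta(G) \geq (1-1/r)n - o(n)$ and a uniform coordinate lower bound $x_v \geq 1 - o(1)$ for all $v$.

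The structural heart of the argument is to fix an $r$-partition $V(G) = V_1 \cup \cdots \cup V_r$ maximizing the $\mathbf{x}$-weighted number of crossing edges, and then to isolate the defective set
\[
  L = \{v \in V(G) : v \text{ has at least } K \text{ non-neighbors across parts, or at least } K \text{ neighbors in its own part}\}
\]
for an appropriate absolute constant $K$. A double counting against $e(G) - e(T_{n,r}) = O(1)$, combined with the uniform eigenvector lower bound, should force $|L| = O(1)$ and $|V_i| = n/r + O(1)$ for each $i$. This is the step I expect to be the main obstacle, because the hypothesis gives only $O(1)$ slack in the edge count, so the argument cannot afford any $o(n)$ loss and must track the edge balance tightly; the specific-family proofs cited in the introduction had additional structure on $F$ to lean on, whereas here we have only the abstract Tur\'an-plus-$O(1)$-edges condition.

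Finally, with $|L| = O(1)$ in hand, for each $v \in L$ I would carry out a local switching step: replace $N(v)$ by the neighborhood $N(v^*)$ of a ``clean'' vertex $v^*$ of maximum eigenvector weight in the same part. Such a switch strictly increases $\lambda$ whenever $v$ is genuinely defective, and after at most $O(1)$ such moves---followed by the deletion of at most $C$ intra-part edges to restore $F$-freeness, each deletion costing only $O(1/n)$ in spectral radius---one arrives at a Tur\'an-plus-$O(1)$-edges $F$-free graph whose spectral radius is at least $\lambda(G)$. Since $G$ is spectral-extremal, every inequality along the way must be an equality, forcing $G$ itself to already be $T_{n,r}$ together with a maximum set of intra-part edges consistent with $F$-freeness; as adding any further edge to such a $G$ would create a copy of $F$, we have $e(G) = \mathrm{ex}(n,F)$, i.e.\ $G \in \mathrm{Ex}(n,F)$.
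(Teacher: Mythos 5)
Your template is the right one and matches the paper's at a high level (lower bound via a test graph, stability, partition refinement, eigenvector bounds, local switching), but there are two substantive gaps that sink the proposal as written.

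\textbf{The endgame is wrong.} After the switching step you conclude that $G$ must already be $T_{n,r}$ together with some intra-part edges, and then argue that because adding any further edge to $G$ would create a copy of $F$, we have $e(G)=\mathrm{ex}(n,F)$. This is a non sequitur: spectral extremality does give edge-maximality (otherwise add an edge and invoke Lemma~\ref{subgraph}), but a maximal $F$-free graph need not be maximum. Nothing in your argument pins down the \emph{number} of intra-part edges, and you never compare $G$ against an actual member of $\mathrm{Ex}(n,F)$. The paper's decisive final move --- and the key idea you are missing --- is to take $H\in\mathrm{Ex}(n,F)$ with the same partition, set $E_1=E(G)\setminus E(H)$, $E_2=E(H)\setminus E(G)$, observe $|E_2|\geq|E_1|+1$ and $|E_2|=O(1)$, and then evaluate $\mathbf{x}^T A(H)\mathbf{x}/\mathbf{x}^T\mathbf{x}$ with $G$'s own Perron vector: since every entry is at least $1-O(1/n)$, swapping $E_1$ for $E_2$ strictly increases the Rayleigh quotient, so $\lambda(H)>\lambda(G)$, a contradiction. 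This is a genuinely different and necessary argument, not a bookkeeping detail. It also requires a quantitative eigenvector bound $x_u\geq 1-O(1/n)$ (the paper's Lemma~\ref{eigenvector}), not merely $1-o(1)$ as you claim, and it requires the parts of $G$ to be balanced to within one vertex (the paper's Lemma~\ref{balance}) so that $G$ and $H$ can be taken to share a partition --- you do not establish either fact.

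\textbf{The middle of the argument leans on an unjustified edge count.} You propose to bound $|L|=O(1)$ by ``double counting against $e(G)-e(T_{n,r})=O(1)$,'' but at that stage you only know $e(G)\geq e(T_{n,r})-o(n^2)$ from stability and $e(G)\leq e(T_{n,r})+a$ from $F$-freeness; the two-sided $O(1)$ bound is precisely what has to be earned, not assumed. The paper instead works with $o(n)$-sized exceptional sets $W$ and $L$, proves $e(G[V_i\setminus L])\leq a$ via a complete-$r$-partite embedding and Lemma~\ref{inclusion exclusion lemma}, and only then drives $L$ to empty via a Perron-vector switching argument. Relatedly, your lower bound $\lambda(G)\geq\lambda(T_{n,r})$ from the test graph $T_{n,r}$ is weaker than what the paper uses; taking $H\in\mathrm{Ex}(n,F)$ as test graph yields the extra $+\frac{2a}{n}$ that is actually exploited in the eigenvector-entry lemma. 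So the proposal is not a complete proof: the scaffolding is recognizable, but the crucial comparison with a genuine member of $\mathrm{Ex}(n,F)$ and the quantitative bounds that make it bite are absent.
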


\section{Notation and Preliminaries}\label{sec: lemmas}

In this section we introduce some notation and give the preparatory lemmas.

Let $G=(V(G),E(G))$ be a simple graph with vertex set $V(G)$ and edge set $E(G)$.  If $u,v\in V(G)$, and $uv\in E(G)$, then $u$ and $v$ are said to be {\sl adjacent}. For a vertex $v\in V(G)$, the {\sl neighborhood} $N_{G}(v)$ (or simply $N(v)$) of $v$ is $\{u|\ uv\in E(G)\}$, and the {\sl degree} $d_{G}(v)$ (or simply $d(v)$) of $v$ is $|N_{G}(v)|$. The minimum and maximum degrees are denoted by $\delta(G)$ and $\Delta(G)$, respectively. For $S\subseteq V(G)$ and $v\in V(G)$, let $d_{S}(v)=|N_S(v)|=|N_{G}(v)\cap S|$. %We denote by $e(G)$ the number of edges  in $G$.
For $V_1,V_2 \subseteq V(G)$, $e(V_1,V_2)$ denotes the number of edges of $G$ between  $V_1$ and  $V_2$. For $S\subseteq V(G)$, denote by $G\setminus S$ the graph
obtained from $G$ by deleting all vertices of $S$ and the incident edges. Denote by $G[S]$ the graph  induced by $S$ whose vertex set is $S$ and whose edge set consists of all edges of $G$ which have both ends in $S$.

Let $G$ be a simple graph with $n$ vertices. The {\sl adjacent matrix} of $G$ is $A(G)=(a_{ij})_{n\times n}$ with $a_{ij}=1$ if $ij\in E(G)$, and $a_{ij}=0$ otherwise. The {\sl spectral radius} of $G$ is the largest eigenvalue of $A(G)$, denoted by $\lambda(G)$. Let $G_1,\ldots, G_s$ be the components of $G$, then $\lambda(G)=\max\{ \lambda(G_i)|\ i\in [s]\}$. %If $s=1$, $G$ is connected.
For a connected graph $G$, let $\mathbf{x}=(x_1,\ldots,x_n)^{\mathrm{T}}$ be an eigenvector of $A(G)$ corresponding to $\lambda(G)$. Then $\mathbf{x}$ is a positive real vector, and
\begin{equation}\label{eigenequation}
\lambda(G)x_i=\sum_{ij\in E(G)}x_j, \text{ for any } i\in [n].
\end{equation}
The following Rayleigh quotient equation is very useful:
\begin{equation}\label{Rayleigh}
\lambda(G)=\max_{\mathbf{x}\in \mathbb{R}^{n}_{+}}\frac{\mathbf{x}^{\mathrm{T}}A(G)\mathbf{x}}{\mathbf{x}^{\mathrm{T}}\mathbf{x}}=\max_{\mathbf{x}\in \mathbb{R}^{n}_{+}}\frac{2\sum_{ij\in E(G)}x_ix_j}{\mathbf{x}^{\mathrm{T}}\mathbf{x}}.
\end{equation}

We have the following two lemmas from Zhan \cite{Zhan13}.
\begin{lemma}[Zhan \cite{Zhan13}]\label{zhan1}
Let $A$ and $B$ be two nonnegative square matrices. If $B<A$ and $A$ is irreducible, then $\lambda(B)< \lambda(A)$.
\end{lemma}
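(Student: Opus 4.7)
The plan is to apply the Perron--Frobenius theorem twice: once to $A$, whose irreducibility furnishes a strictly positive left eigenvector, and once to $B$, which is only nonnegative and so yields merely a nonnegative right eigenvector. Concretely, I would pick $\mathbf{y}>\mathbf{0}$ satisfying $\mathbf{y}^{\mathrm{T}}A=\lambda(A)\mathbf{y}^{\mathrm{T}}$ and a nonzero $\mathbf{z}\geq\mathbf{0}$ satisfying $B\mathbf{z}=\lambda(B)\mathbf{z}$.

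The crucial step is the two-way evaluation of $\mathbf{y}^{\mathrm{T}}(A-B)\mathbf{z}$:
\[
\mathbf{y}^{\mathrm{T}}(A-B)\mathbf{z}=\lambda(A)\mathbf{y}^{\mathrm{T}}\mathbf{z}-\lambda(B)\mathbf{y}^{\mathrm{T}}\mathbf{z}=\bigl(\lambda(A)-\lambda(B)\bigr)\mathbf{y}^{\mathrm{T}}\mathbf{z}.
\]
Because $\mathbf{y}>\mathbf{0}$ and $\mathbf{z}$ is a nonzero nonnegative vector, $\mathbf{y}^{\mathrm{T}}\mathbf{z}>0$; and because $B<A$ gives $A-B\geq 0$ entrywise, the left-hand side is $\geq 0$, so one obtains the weak inequality $\lambda(B)\leq\lambda(A)$ for free. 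The entire content of the lemma is thus to show that $\mathbf{y}^{\mathrm{T}}(A-B)\mathbf{z}$ is in fact strictly positive.

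The main obstacle is that if $B$ is reducible, the Perron vector $\mathbf{z}$ may have zero coordinates, and one fears that every positive entry of $A-B$ could live in a column $j$ with $z_j=0$. My plan to rule this out is to set $S=\{j:z_j>0\}\subseteq[n]$. If $S=[n]$, then $\mathbf{z}>\mathbf{0}$ and any positive entry of $A-B$ already contributes a strictly positive summand to $\mathbf{y}^{\mathrm{T}}(A-B)\mathbf{z}$. Otherwise $S$ is a proper nonempty subset, and for each $i\notin S$ the eigenvalue equation gives $0=\lambda(B)z_i=(B\mathbf{z})_i=\sum_{j\in S}B_{ij}z_j$, which together with $B\geq 0$ and $z_j>0$ for $j\in S$ forces $B_{ij}=0$ for all $i\notin S$ and $j\in S$; that is, $B[S^c,S]=\mathbf{0}$. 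At this point irreducibility of $A$ is the decisive input: for any proper nonempty $S\subsetneq[n]$, $A[S^c,S]$ is necessarily nonzero, so some $A_{i_0 j_0}>0$ with $i_0\in S^c$ and $j_0\in S$. Hence $(A-B)_{i_0 j_0}=A_{i_0 j_0}>0$, and together with $y_{i_0}>0$ and $z_{j_0}>0$ this single term makes $\mathbf{y}^{\mathrm{T}}(A-B)\mathbf{z}>0$, so $\lambda(B)<\lambda(A)$.
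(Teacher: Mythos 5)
The paper only cites this lemma from Zhan's textbook and does not include a proof, so there is no in-paper argument to compare yours against. Your proof is correct and self-contained: the two-sided eigenvector identity $\mathbf{y}^{\mathrm{T}}(A-B)\mathbf{z}=(\lambda(A)-\lambda(B))\mathbf{y}^{\mathrm{T}}\mathbf{z}$ with a strictly positive left Perron vector for the irreducible matrix $A$ is the standard device, and you correctly handle the delicate point that the Perron vector $\mathbf{z}$ of the possibly reducible $B$ may vanish on some coordinates, by observing that the support $S$ of $\mathbf{z}$ satisfies $B[S^{c},S]=\mathbf{0}$ while irreducibility of $A$ forces $A[S^{c},S]\neq\mathbf{0}$, producing the needed strictly positive term. (Implicit in your $S=[n]$ branch is that $B<A$ guarantees at least one entry of $A-B$ is positive; that is the intended reading of $B<A$ and is what the application in Lemma~\ref{subgraph} requires.)
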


\begin{lemma}[Zhan \cite{Zhan13}]\label{zhan2}
Let $A$ be a nonnegative square matrix. If $B$ is a principal submatrix of $A$, then $\lambda(B)\leq \lambda(A)$. If $A$ is irreducible and $B$ is a proper principal submatrix of $A$, then $\lambda(B)< \lambda(A)$.
\end{lemma}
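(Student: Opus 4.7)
The plan is to derive Lemma~\ref{zhan2} from Lemma~\ref{zhan1} by embedding the principal submatrix $B$ into an $n\times n$ matrix that is entrywise dominated by $A$. After a simultaneous permutation of rows and columns, which affects neither the spectral radius nor nonnegativity nor irreducibility, we may assume $B$ occupies the leading $k\times k$ block of $A$, where $k$ is the size of $B$ and $S\subseteq[n]$ is the corresponding index set. Define the $n\times n$ nonnegative matrix $\tilde B$ by keeping this leading $k\times k$ block equal to $B$ and setting all other entries to zero; then $\tilde B \leq A$ entrywise.

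First I would observe that $\lambda(\tilde B)=\lambda(B)$. Since $\tilde B$ is block diagonal with blocks $B$ and the $(n-k)\times(n-k)$ zero matrix, its spectrum is the union of the spectrum of $B$ with $\{0\}$; as $B$ is nonnegative, $\lambda(B)\geq 0$, so the largest-modulus eigenvalue of $\tilde B$ equals $\lambda(B)$. Next I would invoke the standard monotonicity of spectral radius for nonnegative matrices: if $0\leq M\leq N$ entrywise, then $\lambda(M)\leq\lambda(N)$ (which follows, for instance, from the Collatz--Wielandt characterization $\lambda(N)=\max_{x>0}\min_{i}(Nx)_i/x_i$, or alternatively from applying the Perron eigenvector of $M$ as a trial vector). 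Applied to $\tilde B\leq A$ this gives the first assertion $\lambda(B)=\lambda(\tilde B)\leq\lambda(A)$.

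For the strict inequality, assume $A$ is irreducible and $B$ is a proper principal submatrix, so $k<n$. Irreducibility forces every row of $A$ to contain a nonzero entry: otherwise the coordinate subspace indexed by a zero row would be invariant, contradicting strong connectivity of the directed graph of $A$. Hence, for any $i\notin S$, the $i$-th row of $\tilde B$ is identically zero while the $i$-th row of $A$ is not, so $\tilde B\neq A$ and $\tilde B<A$ in the sense of Lemma~\ref{zhan1}. Applying Lemma~\ref{zhan1} with $A$ irreducible then yields $\lambda(\tilde B)<\lambda(A)$, i.e.\ $\lambda(B)<\lambda(A)$.

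The routine parts are the permutation reduction and the block-diagonal eigenvalue computation. The subtle point, which I expect to be the main obstacle, is the very last implication: one must be sure that irreducibility of $A$ genuinely prevents $\tilde B$ from coinciding with $A$, and this is exactly where the ``no zero rows'' consequence of irreducibility is used. Once that observation is recorded, Lemma~\ref{zhan1} delivers the strict inequality immediately, and the lemma is complete.
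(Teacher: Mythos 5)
Your proof is correct, but note that the paper does not prove this lemma at all: it is quoted verbatim from Zhan's textbook \cite{Zhan13}, so there is no in-paper argument to compare against. Your derivation is the standard one and is sound. The zero-padding $\tilde B$, the identification $\lambda(\tilde B)=\lambda(B)$ via the block-diagonal spectrum (using $\lambda(B)\ge 0$ for nonnegative $B$), and the entrywise monotonicity of the spectral radius correctly give the weak inequality without any irreducibility hypothesis --- which is necessary, since Lemma~\ref{zhan1} cannot be invoked for that half. For the strict half, your key observation that an irreducible matrix of order $n\ge 2$ has no zero row (a zero row $i$ would make $\{i\}$ an ``absorbing'' index set, contradicting strong connectivity of the digraph), hence $\tilde B\neq A$, is exactly the point that makes Lemma~\ref{zhan1} applicable; and your reading of ``$B<A$'' as ``$B\le A$ entrywise and $B\neq A$'' is the one the paper itself relies on when it combines Lemmas~\ref{zhan1} and~\ref{zhan2} to get Lemma~\ref{subgraph} for proper subgraphs (which typically differ from $G$ in only a few entries, not all). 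The only degenerate case, $n=1$, is vacuous for the strict statement since there is then no proper principal submatrix. No gaps.
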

Let $A(G)$ be the adjacent matrix of graph $G$. Then $G$ is connected if and only if $A(G)$ is irreducible. Combining with Lemmas \ref{zhan1} and \ref{zhan2}, we have the following result.

\begin{lemma}\label{subgraph}
Let $G$ be a connected graph. If $G'$ is a proper subgraph of $G$, then $\lambda(G')< \lambda(G)$.
\end{lemma}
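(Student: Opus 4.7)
The plan is to reduce the statement to a single application of Lemma \ref{zhan1} by first padding $G'$ with isolated vertices so that it becomes a spanning subgraph of $G$ with the same spectral radius.

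First I would define an auxiliary graph $H$ on vertex set $V(H)=V(G)$ with edge set $E(H)=E(G')$. Every edge of $G'$ has both endpoints in $V(G')\subseteq V(G)$, so every vertex of $V(G)\setminus V(G')$ is isolated in $H$, and the subgraph of $H$ induced on $V(G')$ coincides with $G'$. Consequently the components of $H$ are exactly the components of $G'$ together with $|V(G)|-|V(G')|$ isolated vertices. Each isolated vertex contributes only the eigenvalue $0$, while $\lambda(G')\ge 0$ because $A(G')$ is a nonnegative symmetric matrix; by the component identity recorded just before Lemma \ref{subgraph}, this gives $\lambda(H)=\max\{\lambda(G'),0\}=\lambda(G')$.

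Next I would compare $A(H)$ with $A(G)$. Since $V(H)=V(G)$ and $E(H)\subseteq E(G)$, we have $A(H)\le A(G)$ entrywise, and $G$ being connected means $A(G)$ is irreducible. To invoke Lemma \ref{zhan1} I only need one entry of strict inequality, and this splits into two easy cases. If $V(G')=V(G)$, then $E(H)=E(G')\subsetneq E(G)$ since $G'$ is a proper subgraph of $G$, so some edge of $G$ is absent from $H$. If instead $V(G')\subsetneq V(G)$, pick any $v\in V(G)\setminus V(G')$; since $G$ is connected and has at least two vertices, $v$ has a neighbor $u$ in $G$, and the edge $uv\in E(G)$ cannot belong to $E(H)=E(G')$ because $v$ is isolated in $H$.

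In both cases $A(H)<A(G)$ in the sense of Lemma \ref{zhan1}, so that lemma yields $\lambda(H)<\lambda(G)$. Combining with $\lambda(G')=\lambda(H)$ from the first step gives $\lambda(G')<\lambda(G)$, as required. The only step that is not immediate is the identification $\lambda(G')=\lambda(H)$; I expect this to be the main (minor) subtlety, and it is handled cleanly by the standard fact that adding isolated vertices to a graph does not change its spectral radius.
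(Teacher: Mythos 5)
Your proof is correct. It differs mildly but genuinely from the paper's (implicit) argument: the paper obtains Lemma \ref{subgraph} by \emph{combining} Lemma \ref{zhan1} and Lemma \ref{zhan2}, i.e.\ it handles vertex deletion via the proper-principal-submatrix statement and edge deletion via the entrywise-dominance statement, whereas you fold both cases into a single application of Lemma \ref{zhan1} by padding $G'$ with isolated vertices to a spanning subgraph $H$ of $G$. The price of your route is the identification $\lambda(H)=\lambda(G')$, which you justify correctly: the components of $H$ are those of $G'$ plus isolated vertices, $\lambda(G')\ge 0$ since $A(G')$ is symmetric nonnegative with zero trace, and the paper's component identity $\lambda(G)=\max_i\lambda(G_i)$ finishes it. The payoff is that Lemma \ref{zhan2} is not needed at all, and the two cases (missing vertex vs.\ missing edge) are treated uniformly, each merely supplying the one strictly smaller entry needed for $A(H)<A(G)$ (understood, as it must be for adjacency matrices, as entrywise $\le$ with at least one strict inequality). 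The only cosmetic caveat is the degenerate case $|V(G)|=1$, where a proper subgraph has empty vertex set and the statement is vacuous; this does not affect the argument.
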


Recall the classical stability theorem proved by Erd\H{o}s \cite{Erdos1966,Erdos1968} and Simonovits \cite{Sim66}:
\begin{lemma}[Erd\H{o}s \cite{Erdos1966,Erdos1968}, Simonovits \cite{Sim66}]\label{edgestability}
For every $r\geq 2, \varepsilon >0$, and $(r+1)$-chromatic graph $F$, there exists $\delta>0$ such that if a graph $G$ of order $n$ satisfies $e(G)> (1-\frac{1}{r}-\delta)\frac{n^2}{2}$, then either $G$ contains $F$, or $G$ differs form $T_{n,r}$ in at most $\varepsilon n^2$ edges.
\end{lemma}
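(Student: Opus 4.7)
The plan is to follow the classical Erdős--Simonovits stability argument: combine the Erdős--Stone theorem with a minimum-degree cleaning step and a structural partition analysis. Fix $\varepsilon>0$ and an $(r+1)$-chromatic graph $F$. Since $\chi(F)=r+1$, there is an integer $t$ for which $F$ embeds into $K_{r+1}(t)$, the complete balanced $(r+1)$-partite graph with parts of size $t$; in particular, any $F$-free graph is $K_{r+1}(t)$-free. I will choose the constants at the end in the order $t$, then $\delta=\delta(\varepsilon,t,r)$. Suppose for contradiction that $G$ is $F$-free with $e(G)>(1-\tfrac{1}{r}-\delta)\tfrac{n^2}{2}$, but $G$ differs from $T_{n,r}$ in more than $\varepsilon n^2$ edges.

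My first step is to clean $G$ and produce a subgraph $G^{\ast}$ on $n^{\ast}\ge(1-\sqrt{\delta})n$ vertices with $\delta(G^{\ast})\ge(1-\tfrac{1}{r}-2\sqrt{\delta})n^{\ast}$. The procedure is to repeatedly delete any vertex whose degree in the current graph falls below this threshold. Each deletion destroys only a controlled number of edges, so if more than $\sqrt{\delta}\,n$ vertices were ever deleted, the total edge loss would exceed the slack $\delta n^{2}/2$ in the hypothesis, a contradiction. The resulting $G^{\ast}$ is still $F$-free and, after accounting for the removed vertices, still differs from $T_{n^{\ast},r}$ in more than, say, $\tfrac{\varepsilon}{2} n^{2}$ edges.

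Next I would fix a partition $V(G^{\ast})=V_{1}\cup\cdots\cup V_{r}$ that maximizes the number of cross-edges, and show both that (i) each $|V_{i}|=(1+o_\delta(1))n^{\ast}/r$ and (ii) the total number of internal edges together with missing cross-edges is $o_\delta(n^{2})$. Item (ii) is the heart of the proof. If some part $V_{i}$ contained a subgraph with $\eta n^{\ast}$ vertices of internal degree $\ge\eta n^{\ast}$, then Erdős--Stone applied inside $V_{i}$ supplies a copy of $K_{t}$; combined with the fact that the minimum-degree condition on $G^{\ast}$ forces almost every vertex in each $V_{j}$ ($j\ne i$) to have at least $(\tfrac{1}{r}-O(\sqrt{\delta}))n^{\ast}$ neighbors in $V_{i}$, one iteratively selects $t$ vertices in each $V_{j}$ with a common $K_{t}$-neighborhood inside $V_{i}$, producing $K_{r+1}(t)\supseteq F$, a contradiction. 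A symmetric argument handles missing cross-edges: if too many edges are missing between $V_{i}$ and $V_{j}$, the maximality of the cut forces some vertex to have a large internal neighborhood, reducing to the previous case. Once the bad edges are negligible, part-size balance follows by a direct count, since any significant imbalance would reduce $e(G^{\ast})$ well below the hypothesized lower bound.

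The main obstacle is this joint structural step, because the control on internal edges and on missing cross-edges interact through both the minimum-degree condition and the multi-parameter error in Erdős--Stone. The delicate part is ordering the quantifiers: first choose $t$ large enough that $F\subseteq K_{r+1}(t)$; then choose $\delta$ small enough that the cleaning step loses only $O(\sqrt{\delta})\,n$ vertices, the Erdős--Stone error inside any suspicious $V_{i}$ sits far below $\varepsilon$, and the slack accumulated through partition maximization and part-size rebalancing stays below $\varepsilon n^{2}$. No individual step is deep once Erdős--Stone is in hand; the subtlety is entirely in making all error terms cooperate with the single target constant $\varepsilon$.
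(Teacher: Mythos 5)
The paper does not prove this lemma at all: it is quoted verbatim as the classical Erd\H{o}s--Simonovits stability theorem, with the proof deferred to the cited references. So there is nothing internal to compare against; what you have written is a sketch of the standard stability proof (Erd\H{o}s--Stone plus minimum-degree cleaning plus a max-cut partition analysis), and in its overall architecture it is the right argument. Two points deserve tightening. First, in the cleaning step the contradiction is not that ``the edge loss exceeds the slack $\delta n^2/2$'': deleting $\sqrt{\delta}\,n$ vertices of degree roughly $(1-\tfrac1r)n$ necessarily destroys on the order of $\sqrt{\delta}\,n^2$ edges, far more than the slack. The correct accounting is that each deleted vertex has degree strictly below the average forced by the Erd\H{o}s--Stone density, so the edge density of the surviving graph strictly increases at each step; after $\sqrt{\delta}\,n$ deletions it would exceed the $K_{r+1}(t)$-threshold $(1-\tfrac1r+o(1))\binom{n^\ast}{2}$ for its own (smaller) order, yielding $F$. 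Second, Erd\H{o}s--Stone (equivalently K\H{o}v\'ari--S\'os--Tur\'an) applied to a subgraph of $V_i$ with positive edge density produces $K_2(t)=K_{t,t}$, not the clique $K_t$; a dense bipartite graph inside $V_i$ contains no triangle, so a clique cannot be extracted from density alone. This matters for the embedding: the two sides of that $K_{t,t}$ are precisely what supply the $(r+1)$-st part of $K_{r+1}(t)$, with the remaining $r-1$ parts drawn from the other $V_j$ via the common-neighborhood (inclusion--exclusion) count. With those corrections the sketch is the standard proof and is sound.
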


Write $K_r(n_1,\ldots,n_r)$ for the complete $r$-partite graph with classes of sizes $n_1,\ldots, n_r$. Nikiforov \cite{Niki09JGT} proved the spectral version of Stability Lemma.

\begin{lemma}[Nikiforov \cite{Niki09JGT}]  \label{lemniki}
Let $r\ge 2, 1/\ln n < c < r^{-8(r+21)(r+1)}, 0< \varepsilon < 2^{-36}r^{-24}$ and $G$ be a graph on $n$ vertices. If $\lambda (G) > (1- \frac{1}{r} - \varepsilon )n$, then one of the following statements holds: \\
(a) $G$ contains a $K_{r+1}(\lfloor c\ln n\rfloor , \ldots ,\lfloor c\ln n\rfloor,\lceil n^{1-\sqrt{c}}\rceil)$; \\
(b) $G$ differs from $T_{n,r}$ in fewer than $(\varepsilon^{1/4} + c^{1/(8r+8)})n^2$ edges.
\end{lemma}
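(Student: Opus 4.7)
The plan is to establish a dichotomy: either $G$ contains many copies of $K_{r+1}$, which via a blow-up argument yields the dense configuration of case~(a); or $G$ has few such copies, in which case the edge-stability Lemma~\ref{edgestability} forces $G$ close to $T_{n,r}$ as in case~(b).

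First I translate the spectral assumption into degree information. Let $\mathbf{x}$ be the Perron eigenvector of $G$ normalized by $\max_i x_i=1$. Summing the eigenvector equation \eqref{eigenequation} gives $\sum_v d(v)\,x_v=\lambda(G)\sum_v x_v$, so the $x$-weighted average degree is at least $(1-1/r-\varepsilon)n$; in particular the vertex achieving $x_v=1$ has degree at least $\lambda(G)$. Iteratively deleting vertices whose $x$-coordinate (or degree) falls below a small threshold cannot drop $\lambda$ by much, by the Rayleigh quotient \eqref{Rayleigh}. After $o(n)$ such deletions I reach an induced subgraph $H$ on $n'=(1-o(1))n$ vertices with $\delta(H)\geq(1-1/r-O(\varepsilon))n'$, hence $e(H)\geq (1-1/r-O(\varepsilon))\binom{n'}{2}$.

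Next I dichotomize on the number of copies of $K_{r+1}$ in $H$. If $H$ contains at least $\eta n^{r+1}$ copies for a constant $\eta=\eta(c)>0$, an Erd\H{o}s-style supersaturation combined with a dependent-random-choice argument produces a $K_{r+1}(t,\ldots,t)$ with $t=\lfloor c\ln n\rfloor$, and a pigeonhole count of common neighborhoods enlarges one part to size $\lceil n^{1-\sqrt{c}}\rceil$; this puts us in case~(a), and the asymmetric exponent $1-\sqrt{c}$ is exactly what dependent random choice delivers at this scale. Otherwise, delete one edge from each $K_{r+1}$-copy to obtain a $K_{r+1}$-free subgraph $H'$ with $e(H)-e(H')=o(n^2)$. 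Applying Lemma~\ref{edgestability} with $F=K_{r+1}$ (which is $(r+1)$-chromatic) shows that $H'$ differs from $T_{n',r}$ in at most $\varepsilon'(n')^2$ edges, and adding back the removed edges and deleted vertices yields that $G$ differs from $T_{n,r}$ in at most the claimed $(\varepsilon^{1/4}+c^{1/(8r+8)})n^2$ edges.

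The main obstacle is the quantitative calibration. The exponents $\varepsilon^{1/4}$ and $c^{1/(8r+8)}$ are forced by simultaneously optimizing three quantities: the $x$-value threshold used in Step~1, the ``few versus many $K_{r+1}$'' threshold $\eta$ in the dichotomy, and the parameter $c$ that controls the size of the blow-up in Step~2. A naive application of $\lambda(G)^2\leq 2e(G)$ is \emph{too weak} here, since it only yields $e(G)\geq (1-1/r-\varepsilon)^2 n^2/2$, which lies below the Tur\'an density $(1-1/r)n^2/2$; so the full strength of the Perron eigenvector is needed in Step~1 to recover a uniform minimum-degree bound before supersaturation and edge stability can be applied.
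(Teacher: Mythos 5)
First, a point of order: the paper does not prove this lemma at all --- it is quoted from Nikiforov \cite{Niki09JGT} as a black box (Corollary \ref{coro22} is the only thing derived from it). So there is no in-paper proof to compare against; I am judging your sketch against Nikiforov's original argument. Your overall architecture is the right one: the dichotomy between ``many copies of $K_{r+1}$, hence a large complete $(r+1)$-partite subgraph by a supersaturation/dependent-random-choice argument'' (giving case (a), with the exponent $1-\sqrt{c}$ indeed coming from that style of argument) and ``few copies of $K_{r+1}$, hence delete $o(n^2)$ edges, apply the Erd\H{o}s--Simonovits stability theorem, and land in case (b)'' is genuinely how the proof goes.

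The gap is in your Step 1, and it sits at the exact point where the lemma is hard. You claim that from $\lambda(G)>(1-\frac{1}{r}-\varepsilon)n$ one can delete $o(n)$ vertices to reach an induced subgraph $H$ on $n'=(1-o(1))n$ vertices with $\delta(H)\ge(1-\frac{1}{r}-O(\varepsilon))n'$, hence $e(H)\ge(1-\frac{1}{r}-O(\varepsilon))\binom{n'}{2}$. This is false as an unconditional statement: take $G$ to be $K_m$ with $m=\lceil(1-\frac{1}{r})n\rceil$ together with $n-m$ isolated vertices. Then $\lambda(G)=m-1>(1-\frac{1}{r}-\varepsilon)n$, but $e(G)\approx(1-\frac{1}{r})^2\frac{n^2}{2}$, well below the Tur\'an density, and no deletion of $o(n)$ vertices produces a spanning-size subgraph of the required minimum degree (you must delete all $n/r$ isolated vertices). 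This graph of course lands in case (a), which is precisely the point: the density conclusion you want in Step 1 is only true \emph{conditionally on the ``few $K_{r+1}$'s'' branch of the dichotomy}, so the dichotomy cannot be postponed to Step 2. The missing ingredient is the quantitative bridge Nikiforov actually uses, namely a clique-counting strengthening of $\lambda^2(G)\le 2e(G)$ (an inequality bounding $\lambda^{r+1}(G)$ by a weighted sum of the clique numbers $k_s(G)$, $2\le s\le r+1$, from his ``Bounds on graph eigenvalues'' work): it shows that if $k_{r+1}(G)$ is small then $e(G)$ must exceed $(1-\frac{1}{r}-\varepsilon')\frac{n^2}{2}$, which is what licenses the appeal to Lemma \ref{edgestability}. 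Your own closing remark correctly diagnoses that $\lambda^2\le 2e$ is too weak, but the eigenvector-weighted averaging and vertex-deletion procedure you propose in its place does not close the gap (deleting vertices only decreases $\lambda$, and a large $\mathbf{x}$-weighted average degree can be carried by a vanishing fraction of the vertices). Once that bridge is supplied, the rest of your outline (supersaturation for (a), stability plus adding back $o(n^2)$ edges for (b)) is sound modulo the quantitative calibration you already flag.
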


From the above theorem, one can easily get the following result.

\begin{corollary}  \label{coro22}
Let $F$ be a graph with chromatic number $\chi (F)=r+1$. For every $\varepsilon >0$, there exist $\delta >0$ and $n_0$ such that if  $G$ is an $F$-free graph on $n\ge n_0$ vertices  with $\lambda (G) \ge (1- \frac{1}{r} -\delta )n$, then $G$ can be obtained from $T_{n,r}$ by adding and deleting at most $\varepsilon n^2$ edges.
\end{corollary}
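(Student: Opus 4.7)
The plan is to deduce the corollary directly from Nikiforov's spectral stability lemma (Lemma \ref{lemniki}) by ruling out its first alternative. Given $\varepsilon > 0$, I would first choose parameters $\varepsilon_0 > 0$ and $c > 0$ satisfying the constraints of Lemma \ref{lemniki} (namely $\varepsilon_0 < 2^{-36}r^{-24}$ and $c < r^{-8(r+21)(r+1)}$) and small enough that
$$\varepsilon_0^{1/4} + c^{1/(8r+8)} \le \varepsilon.$$
Set $\delta = \varepsilon_0$ and pick $n_0$ large enough that $1/\ln n < c$, that $\lfloor c \ln n\rfloor \ge |V(F)|$, and that $\lceil n^{1-\sqrt{c}}\rceil \ge |V(F)|$ for every $n \ge n_0$. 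Then any $F$-free graph $G$ on $n \ge n_0$ vertices with $\lambda(G) \ge (1 - \frac{1}{r} - \delta)n$ falls within the scope of Lemma \ref{lemniki}, so either alternative (a) or alternative (b) holds.

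If (b) holds, then $G$ differs from $T_{n,r}$ in fewer than $(\varepsilon_0^{1/4} + c^{1/(8r+8)})n^2 \le \varepsilon n^2$ edges, which is the desired conclusion. So it suffices to rule out (a). Suppose for contradiction that $G$ contains a copy of $K_{r+1}(\lfloor c \ln n\rfloor, \ldots, \lfloor c \ln n\rfloor, \lceil n^{1-\sqrt{c}}\rceil)$. Since $\chi(F) = r+1$, a proper $(r+1)$-coloring of $F$ partitions $V(F)$ into $r+1$ independent sets, each of size at most $|V(F)|$. By the choice of $n_0$, every part of the blow-up has at least $|V(F)|$ vertices, so one can inject each color class of $F$ into a distinct part of the blow-up; all edges of $F$ are then present because the blow-up is complete multipartite. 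This produces a copy of $F$ in $G$, contradicting the $F$-freeness of $G$. Hence (a) is impossible and (b) must hold.

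There is no essential obstacle in this argument; it is a routine combination of Lemma \ref{lemniki} with the elementary observation that an $(r+1)$-chromatic graph embeds into every complete $(r+1)$-partite graph whose parts are at least as large as $F$. The only point requiring care is the coordination of the parameters $\delta,\,\varepsilon_0,\,c,\,n_0$ so that, simultaneously, the conclusion of alternative (b) is sharp enough to give $\varepsilon n^2$ and the parts in alternative (a) are big enough to absorb any fixed graph $F$.
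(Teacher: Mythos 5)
Your argument is correct and is exactly the routine deduction the paper has in mind when it writes ``From the above theorem, one can easily get the following result'' without supplying a proof: apply Lemma~\ref{lemniki} with parameters tuned so that $\varepsilon_0^{1/4}+c^{1/(8r+8)}\le\varepsilon$, and rule out alternative (a) by embedding an $(r+1)$-coloring of $F$ into the complete $(r+1)$-partite blow-up, whose parts all have size at least $|V(F)|$ once $n$ is large. No gap; this matches the intended proof.
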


For $K_{r}(n_1,n_2,\ldots,n_{r})$, let $n=\sum_{i=1}^{r}n_i$. For convenience, we assume that $n_1\geq n_2\geq \ldots \geq n_{r}>0$. It is well-known \cite[p. 74]{CDS1980} or \cite{Delorme}
that the characteristic polynomial of $K_{r}(n_1,n_2,\ldots,n_{r})$ is given as
$$\phi(K_{r}(n_1,n_2,\ldots,n_{r}), x)=x^{n-r}\left(1-\sum_{i=1}^{r}\frac{n_i}{x+n_i}\right)\prod_{j=1}^{r}(x+n_j).$$
So the spectral radius $\lambda(K_{r}(n_1,n_2,\ldots,n_{r}))$ satisfies the following equation:
\begin{equation}
\sum_{i=1}^{r}\frac{n_i}{\lambda(K_{r}(n_1,n_2,\ldots,n_{r}))+n_{i}}=1 \label{eq0}
\end{equation}

Feng, Li and Zhang \cite[Theorem 2.1]{FLZ2007} proved the following lemma, which can also be seen in Stevanovi\'{c}, Gutnam and Rehman \cite{Stevanovicetal}.

\begin{lemma}[Feng et al. \cite{FLZ2007}, Stevanovi\'{c} et al. \cite{Stevanovicetal}]\label{rpartitegraph}
If $n_i-n_j\geq 2$, then $$\lambda(K_{r}(n_1,\ldots,n_i-1,\ldots,n_j+1,\ldots,n_{r}))>\lambda(K_{r}(n_1,\ldots,n_i,\ldots,n_j,\ldots,n_{r})).$$
\end{lemma}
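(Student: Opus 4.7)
The natural plan is to exploit the characteristic equation (\ref{eq0}), which implicitly defines the spectral radius of a complete $r$-partite graph as the unique positive root of a strictly decreasing function. Writing $\lambda = \lambda(K_r(n_1,\ldots,n_i,\ldots,n_j,\ldots,n_r))$ and $\lambda' = \lambda(K_r(n_1,\ldots,n_i - 1,\ldots,n_j + 1,\ldots,n_r))$, I would introduce
\[
f(x) = \sum_{k=1}^{r} \frac{n_k}{x + n_k}, \qquad g(x) = \sum_{k \neq i,j} \frac{n_k}{x + n_k} + \frac{n_i - 1}{x + n_i - 1} + \frac{n_j + 1}{x + n_j + 1},
\]
so that by (\ref{eq0}) one has $f(\lambda) = 1$ and $g(\lambda') = 1$. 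Since each summand $\frac{n}{x+n} = 1 - \frac{x}{x+n}$ is strictly decreasing in $x > 0$, both $f$ and $g$ are strictly decreasing on $(0,\infty)$. Consequently, to conclude $\lambda' > \lambda$ it suffices to show $g(\lambda) > 1$, i.e.\ $g(\lambda) > f(\lambda)$.

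The proof then reduces to the purely algebraic inequality
\[
\frac{n_i - 1}{\lambda + n_i - 1} + \frac{n_j + 1}{\lambda + n_j + 1} \;>\; \frac{n_i}{\lambda + n_i} + \frac{n_j}{\lambda + n_j}.
\]
A short computation using $\frac{m}{x + m} = 1 - \frac{x}{x + m}$ collapses the left-hand side minus the right-hand side to
\[
\lambda \left[ \frac{1}{(\lambda + n_j)(\lambda + n_j + 1)} - \frac{1}{(\lambda + n_i - 1)(\lambda + n_i)} \right].
\]
The hypothesis $n_i - n_j \geq 2$ gives $n_i - 1 \geq n_j + 1$ and $n_i \geq n_j + 2$, so the second denominator strictly exceeds the first, making the bracket strictly positive; since $\lambda > 0$, we conclude $g(\lambda) > f(\lambda) = 1$. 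Combined with $g(\lambda') = 1$ and the strict monotonicity of $g$, this forces $\lambda' > \lambda$, completing the argument.

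There is no real obstacle here beyond keeping the algebra straight: the key observation is that shifting mass from a bigger part to a smaller one (subject to $n_i - n_j \geq 2$) strictly increases the function value at the old spectral radius, precisely because the relevant cross-term $\frac{1}{(\lambda + n_i - 1)(\lambda + n_i)}$ is smaller than $\frac{1}{(\lambda + n_j)(\lambda + n_j + 1)}$. An alternative, purely eigenvector-based route via the Rayleigh quotient (\ref{Rayleigh}) would also work, but I expect the implicit-equation approach above to be cleanest since the characteristic identity (\ref{eq0}) is already stated right before the lemma.
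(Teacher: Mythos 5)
Your proof is correct. The paper itself does not prove this lemma (it is quoted from Feng--Li--Zhang and Stevanovi\'{c} et al.), but your argument via the characteristic identity (\ref{eq0}) and monotonicity of $x\mapsto\sum_k n_k/(x+n_k)$ is the standard one from those sources, and in fact your key algebraic identity
\[
g(\lambda)-f(\lambda)=\frac{2(n_i-n_j-1)\lambda^2+(n_i+n_j)(n_i-n_j-1)\lambda}{(\lambda+n_i-1)(\lambda+n_i)(\lambda+n_j+1)(\lambda+n_j)}
\]
is exactly the quantity the authors re-derive quantitatively in Claim 1 of the proof of Lemma \ref{balance}, so your route matches the paper's own usage as well.
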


The following lemma was given in \cite{CFTZ20}.

\begin{lemma}[Cioab\u{a} et al. \cite{CFTZ20}] \label{inclusion exclusion lemma}
Let $A_1, \ldots, A_p$ be finite sets. Then
\[|A_1 \cap \ldots \cap A_p| \geq \sum_{i=1}^p |A_i| - (p-1)\bigg| \bigcup_{i=1}^p A_i \bigg|.\]
\end{lemma}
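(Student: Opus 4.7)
The plan is to reduce the inequality to the ordinary union bound by passing to complements inside the joint union. Let $U := \bigcup_{i=1}^p A_i$, and for each $i$ set $B_i := U \setminus A_i$. Then De Morgan's law inside $U$ gives
\[
U \setminus \bigcap_{i=1}^p A_i \;=\; \bigcup_{i=1}^p B_i,
\]
so the (trivial) union bound yields
\[
|U| - \Bigl|\bigcap_{i=1}^p A_i\Bigr| \;\le\; \sum_{i=1}^p |B_i| \;=\; \sum_{i=1}^p \bigl(|U| - |A_i|\bigr) \;=\; p|U| - \sum_{i=1}^p |A_i|.
\]
Rearranging gives exactly $|A_1 \cap \cdots \cap A_p| \ge \sum_{i=1}^p |A_i| - (p-1)|U|$, which is the claim.

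As an alternative I would present a double-counting argument, useful because it makes transparent when equality is approached. For each $x \in U$ let $f(x) := |\{i : x \in A_i\}|$, so $1 \le f(x) \le p$. Then $\sum_{i=1}^p |A_i| = \sum_{x \in U} f(x)$, while $|A_1 \cap \cdots \cap A_p| = |\{x \in U : f(x) = p\}|$. Splitting $U$ according to whether $f(x) = p$ or $f(x) \le p-1$ and bounding $f(x)$ crudely by $p$ in the first case and by $p-1$ in the second gives
\[
\sum_{x \in U} f(x) \;\le\; p\,\bigl|\{x : f(x)=p\}\bigr| + (p-1)\bigl(|U| - |\{x : f(x)=p\}|\bigr) \;=\; \Bigl|\bigcap_{i=1}^p A_i\Bigr| + (p-1)|U|,
\]
which rearranges to the desired inequality.

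I do not expect any genuine obstacle here: the only point worth flagging is that the complements $B_i$ must be taken relative to $U$ rather than some larger ambient universe, since otherwise the bound $\sum|B_i|$ becomes wasteful and the target inequality fails to drop out. With that choice the proof is a one-line consequence of sub-additivity of cardinality under finite unions.
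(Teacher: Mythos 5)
Both of your arguments are correct. The paper itself states this lemma as a citation from Cioab\u{a} et al.\ and gives no proof, so there is nothing to match against; the usual written-out argument is an induction on $p$ from the two-set identity $|A\cap B|=|A|+|B|-|A\cup B|\ge |A|+|B|-\bigl|\bigcup_i A_i\bigr|$. Your first proof (complementation inside $U=\bigcup_i A_i$ followed by the union bound) is a clean non-inductive version of the same idea, and your remark that the complements must be taken relative to $U$ rather than a larger universe is exactly the right point to flag. Your second, double-counting proof is the more informative one: it shows the inequality is just the observation that every element of $U$ outside the full intersection is missed by at least one $A_i$, and it makes the equality case transparent (every point of $U$ lies in exactly $p$ or exactly $p-1$ of the sets). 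Either proof is complete and could be spliced in as a self-contained justification of the lemma.
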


 \section{Proof of Theorem \ref{main result}}

Let $F$ be any graph such that the graphs in $\mathrm{Ex}(n,F)$ are Tur\'{a}n graphs plus $O(1)$ edges. We may assume that
the graphs in $\mathrm{Ex}(n,F)$ are obtained from $T_{n,r}$ by adding $a$ edges. Then $\mathrm{ex}(n,F)=e(T_{n,r})+a$, which implies that $\chi(F)=r+1$ by  (\ref{eqESS}) and the fact $(1-\frac{1}{r})\frac{n^2}{2}-\frac{r}{8}\leq e(T_{n,r})\leq (1-\frac{1}{r})\frac{n^2}{2}$. %By Lemma \ref{edgestability}, for any $\varepsilon>0$, the graphs in $\mathrm{Ex}(n, F)$ are obtained from $T_{n,r}$ by adding and deleting at most $\varepsilon n^2$ edges.
In the sequel, we always assume that $G$ is a graph on $n$ vertices containing no $F$ as a subgraph and attaining the maximum spectral radius. The aim of this section is to prove that $G$ is obtained from $T_{n,r}$ by adding $a$ edges for $n$ large enough.

The sketch of our proof is as follows: Firstly, we give a lower bound on $\lambda(G)$, and determine a partition $V(G)=V_1 \cup \cdots \cup V_{r}$ such that $\sum_{1\leq i<j\leq r}e(V_i,V_j)$ attains the maximum by using the spectral version of Stability Lemma. Then we show that any vertex except at most $2a$ vertices in  $V_i$ is adjacent to all vertices in $V_j$ for any $i,j\in [r]$ and $j\not=i$. Next, we prove that all vertices have eigenvector entry very close to the maximum entry and show that the partition is balanced. Finally, we prove $e(G)=\mathrm{ex}(n,F)$ by contradiction.

\begin{lemma}
$G$ is connected.
\end{lemma}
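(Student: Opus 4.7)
The plan is to assume for contradiction that $G$ is disconnected and construct a graph $G^*$ on the same vertex set which is $F$-free yet satisfies $\lambda(G^*)>\lambda(G)$, contradicting the spectral extremality of $G$. Let $G_1$ be a component of $G$ with $\lambda(G_1)=\lambda(G)$, let $x$ be the Perron eigenvector of $A(G_1)$ extended by $0$ on $V(G)\setminus V(G_1)$, and let $u\in V(G_1)$ satisfy $x_u=\max_w x_w>0$ (positivity follows from Perron--Frobenius applied to the connected $G_1$). Pick any $v\in V(G)\setminus V(G_1)$, and let $G^*$ be obtained from $G$ by deleting every edge incident to $v$ and then adding the edge $uv$; thus $v$ is a pendant at $u$ in $G^*$, and crucially $G^*-v\subseteq G$.

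For the spectral-radius increase I would use a standard Rayleigh-quotient perturbation: define $y$ by $y_w=x_w$ for $w\neq v$ and $y_v=\varepsilon$ for small $\varepsilon>0$. Because $x$ vanishes on $V(G)\setminus V(G_1)$ and the only edge of $G^*$ at $v$ is $uv$, direct computation yields $y^{\top}A(G^*)y=\lambda(G)\|x\|^2+2\varepsilon x_u$ and $y^{\top}y=\|x\|^2+\varepsilon^2$, so (\ref{Rayleigh}) combined with $x_u>0$ gives $\lambda(G^*)>\lambda(G)$ for all $\varepsilon$ sufficiently small.

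The main obstacle is showing that $G^*$ is still $F$-free. Since $G^*-v\subseteq G$, any $F$-copy in $G^*$ must use $v$; and since $v$ has degree $1$ in $G^*$, its preimage $w\in V(F)$ must satisfy $d_F(w)\le 1$. We may assume $F$ has no isolated vertex (removing one does not alter $F$-freeness on graphs of order $n\ge |V(F)|$), so $w$ is a pendant of $F$ with unique neighbour $w'$, which is forced to map to $u$. Restricting the embedding to $F-w$ then produces a copy of $F-w$ in $G$ sending $w'$ to $u$; let $S\subseteq V(G)$ be its image, so $|S|=|V(F)|-1$. If $N_G(u)\not\subseteq S$, any vertex of $N_G(u)\setminus S$ could serve as the image of $w$ (the only edge of $F$ at $w$ is $ww'$), yielding $F\subseteq G$ and contradicting $F$-freeness. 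Therefore $d_G(u)\le |V(F)|-1$.

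The final contradiction follows from the eigenequation (\ref{eigenequation}): since $x_u=\max_w x_w$, we have $\lambda(G)\,x_u=\sum_{w\in N_G(u)}x_w\le d_G(u)\,x_u\le (|V(F)|-1)\,x_u$, so $\lambda(G)\le |V(F)|-1$, a constant. But $T_{n,r}$ is $F$-free (since $\chi(F)=r+1>r=\chi(T_{n,r})$), so the extremality of $G$ forces $\lambda(G)\ge \lambda(T_{n,r})\ge (1-1/r)n-1$, which exceeds $|V(F)|-1$ for $n$ large. This contradiction shows $G^*$ is $F$-free, and together with $\lambda(G^*)>\lambda(G)$ it contradicts the choice of $G$, so $G$ is connected.
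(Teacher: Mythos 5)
Your proof is correct and follows essentially the same strategy as the paper's: assume $G$ is disconnected with dominant component $G_1$, attach a pendant edge to obtain a graph of strictly larger spectral radius, and use a replacement argument (swapping the pendant's image for another neighbour) to force $d_G(u)\le |V(F)|-1$, which contradicts $\lambda(G)\ge\lambda(T_{n,r})=\Theta(n)$. The only cosmetic differences are that you explicitly handle the isolated-vertex case of $F$, you rewire inside $G$ rather than discarding the other components, and you work with the specific vertex maximizing the Perron entry while the paper bounds $\Delta(G_1)$ over all $u\in V(G_1)$; also note that once you reach $\lambda(G)\le|V(F)|-1$ you already have a self-standing contradiction with $\lambda(G)\ge\lambda(T_{n,r})$, so the final re-appeal to $\lambda(G^*)>\lambda(G)$ and extremality is superfluous.
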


\begin{proof}
Suppose to the contrary that $G$ is not connected. Assume
$G_1,\ldots,G_s$ are the components of $G$ and $\lambda(G_1)=\max\{\lambda(G_i)|\ i\in [s]\}$, then $\lambda(G)=\lambda(G_1)$ and
$|V(G_1)|\leq n-1$. For any vertex $u\in V(G_1)$, let $G'$ be the graph obtained from $G_1$ by adding a pendent edge $uv$ at $u$ and $n-1-|V(G_1)|$ isolated vertices. Then $\lambda(G')>\lambda(G_1)=\lambda(G)$ by Lemma \ref{subgraph}. This implies that $G'$ contains a copy of  $F$ as a subgraph, denote it as $F_1$, then $uv$ is an edge of $F_1$. Next, we claim that $d_{G_1}(u)<|V(F)|$. Otherwise, $d_{G_1}(u)\geq |V(F)|$. Then there exists a vertex $w\in N_{G_1}(u)$ and $w\notin V(F_1)$. Then $F_1-uv+uw$ is a copy of $F$ in $G_1$, this is a contraction.
Due to the arbitrary of vertex $u$, $\Delta(G_1)<|V(F)|$.
Thus $\lambda(G)=\lambda(G_1)\leq \Delta(G_1)<|V(F)|<\lambda(T_{n,r})$ and this contradicts the fact that $G$ has the maximum spectral radius among all $n$-vertex $F$-free graphs as $T_{n,r}$ is $F$-free. Therefore, $G$ is connected.
\end{proof}

In the following, let $\lambda(G)$ be the spectral radius of $G$, $\mathbf{x}$ be a positive eigenvector corresponding to $\lambda(G)$ with $\max\{x_i|\ i\in V(G)\}=1$. Without loss of generality, we assume that $x_z = 1$. If there are multiple such vertices, we choose and fix $z$ arbitrarily among them.

\begin{lemma} \label{lem32}
%Let $G$ be an $F$-free graph on $n$ vertices with maximum spectral radius. Then
$$\lambda(G) \ge \bigg(1- \frac{1}{r}\bigg)n - \frac{r}{4n}+\frac{2a}{n}.$$
\end{lemma}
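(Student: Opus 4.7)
\medskip

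The plan is to exploit the fact that $G$ maximizes the spectral radius among $n$-vertex $F$-free graphs, so for any specific $F$-free graph $G^\ast$ on $n$ vertices we have $\lambda(G) \geq \lambda(G^\ast)$. The natural choice is to pick $G^\ast \in \mathrm{Ex}(n,F)$, i.e.\ a graph obtained from $T_{n,r}$ by adding $a$ edges, which by hypothesis is $F$-free and has exactly $e(T_{n,r}) + a$ edges.

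To estimate $\lambda(G^\ast)$ from below, I would apply the Rayleigh quotient (\ref{Rayleigh}) with the all-ones test vector $\mathbf{1} \in \mathbb{R}^n_+$, which yields
\[
\lambda(G^\ast) \;\geq\; \frac{\mathbf{1}^{\mathrm{T}} A(G^\ast)\, \mathbf{1}}{\mathbf{1}^{\mathrm{T}} \mathbf{1}} \;=\; \frac{2 e(G^\ast)}{n} \;=\; \frac{2 e(T_{n,r}) + 2a}{n}.
\]
So the task reduces to a lower bound for $2 e(T_{n,r})$. Writing $n = qr + s$ with $0 \leq s < r$, the Tur\'an graph $T_{n,r}$ has $s$ parts of size $q+1$ and $r-s$ parts of size $q$, and a short computation (expanding $2 e(T_{n,r}) = n^2 - \sum n_i^2$) gives the clean identity
\[
2 e(T_{n,r}) \;=\; \bigg(1 - \frac{1}{r}\bigg) n^2 + \frac{s(s-r)}{r}.
\]
Since $0 \leq s \leq r-1$, the quadratic $s(s-r)$ is minimized near $s = r/2$ with value $\geq -r^2/4$, hence $s(s-r)/r \geq -r/4$, which yields $2 e(T_{n,r}) \geq (1 - 1/r)n^2 - r/4$.

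Combining the two displays,
\[
\lambda(G) \;\geq\; \lambda(G^\ast) \;\geq\; \frac{(1 - 1/r)n^2 - r/4 + 2a}{n} \;=\; \bigg(1 - \frac{1}{r}\bigg) n - \frac{r}{4n} + \frac{2a}{n},
\]
as required. There is no serious obstacle here: the only place that requires a small amount of care is the explicit computation of $e(T_{n,r})$ (or equivalently the bound $\sum n_i^2 \leq n^2/r + r/4$), which is standard. The rest is a one-line application of Rayleigh's principle together with the extremality of $G$.
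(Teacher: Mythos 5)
Your proof is correct and follows essentially the same route as the paper: take an extremal graph $H \in \mathrm{Ex}(n,F)$ with $e(H) = e(T_{n,r}) + a$, apply the Rayleigh quotient with the all-ones vector to get $\lambda(G) \geq \lambda(H) \geq 2e(H)/n$, then lower-bound $e(T_{n,r})$ by $(1-1/r)\frac{n^2}{2} - \frac{r}{8}$. The only difference is that you spell out the elementary computation behind that last bound, which the paper takes as known.
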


\begin{proof}
  Let $H$ be an $F$-free graph on $n$ vertices with maximum number of edges.
  Since $G$ attains the maximum spectral radius over all $n$-vertex $F$-free graphs, and $\mathrm{ex}(n,F)=e(T_{n,r})+a$, by the Rayleigh quotient equation, we have
\begin{equation*}
\lambda (G) \geq \lambda (H) \geq \frac{\mathbf{1}^T A(H) \mathbf{1}}{\mathbf{1}^T\mathbf{1}}
= \frac{ 2( e(T_{n,r})+ a)}{n}\geq \frac{2}{n}\left(\left(1-\frac{1}{r}\right)\frac{n^2}{2}-\frac{r}{8}+a\right)\geq \left(1- \frac{1}{r}\right)n - \frac{r}{4n}+\frac{2a}{n}.
\end{equation*}
\end{proof}

%Applying Corollary \ref{coro22}, we obtain the  asymptotic structure of $G$. %Roughly speaking, we can find a large $r$-partite subgraph in $G$.
 Let $\ell$ be an integer  satisfying $\ell\gg \max\{a,|V(F)|\}$.

\begin{lemma}%[Approximate structure]
\label{approximate structure}
For every  $\epsilon > 0$, there exists an integer $n_0$ such that if $n\geq n_0$,  then
$$e(G) \geq e(T_{n,r}) - \epsilon n^2.$$
Furthermore,
$G$ has a partition $V(G) = V_1 \cup \ldots \cup V_{r}$ such that $\sum_{1\leq i<j\leq r}e(V_i,V_j)$ attains the maximum, and
\[ \sum_{i=1}^{r} e(V_i) \leq \epsilon n^2, \]
and for each $i \in [r]$,
 \[ \left(\frac{1}{r} - 3\sqrt{\epsilon}\right)n < |V_i| < \left(\frac{1}{r} + 3\sqrt{\epsilon}\right)n. \]
\end{lemma}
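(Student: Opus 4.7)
The plan is to treat this as a direct corollary of the spectral stability lemma (Corollary 2.7) together with the lower bound on $\lambda(G)$ from Lemma \ref{lem32}. The three claimed conclusions (edge count, small within-part edge count, balanced part sizes) should all follow once we place $G$ within $O(\epsilon n^2)$ edit distance of $T_{n,r}$ and then exploit the optimality of the chosen partition.

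First, I would fix some auxiliary $\epsilon_1 > 0$ to be specified at the end (it will suffice to take $\epsilon_1 = \epsilon$). By Lemma \ref{lem32}, $\lambda(G) \ge (1-\tfrac{1}{r})n - \tfrac{r}{4n} + \tfrac{2a}{n}$, so for $n$ large enough we have $\lambda(G) \ge (1-\tfrac{1}{r}-\delta)n$ for the $\delta$ supplied by Corollary \ref{coro22} applied with parameter $\epsilon_1$. Since $G$ is $F$-free and $\chi(F)=r+1$, Corollary \ref{coro22} produces a partition $V_1^\ast \cup \cdots \cup V_r^\ast$ of $V(G)$ (the partition coming from the Turán template) such that the symmetric difference $|E(G)\triangle E(T_{n,r}^\ast)| \le \epsilon_1 n^2$, where $T_{n,r}^\ast$ is the Turán graph on this partition. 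In particular, the number of edges of $G$ missing between classes is at most $\epsilon_1 n^2$, and the number of edges of $G$ inside classes is at most $\epsilon_1 n^2$. Comparing edge counts gives $e(G) \ge e(T_{n,r}^\ast) - \epsilon_1 n^2 \ge e(T_{n,r}) - \epsilon_1 n^2$, which yields the first conclusion.

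Next, I would pass to the partition $V_1 \cup \cdots \cup V_r$ of $V(G)$ that maximizes $\sum_{i<j} e(V_i,V_j)$. Since $e(G) = \sum_{i<j} e(V_i,V_j) + \sum_i e(V_i)$ and the first term is as large as possible, equivalently $\sum_i e(V_i)$ is minimized. But the Turán partition $V_1^\ast,\ldots,V_r^\ast$ already satisfies $\sum_i e(V_i^\ast) \le \epsilon_1 n^2$, so the optimal partition satisfies $\sum_i e(V_i) \le \epsilon_1 n^2 \le \epsilon n^2$, giving the second conclusion.

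For the balancedness, I would combine the trivial bound $\sum_{i<j} e(V_i,V_j) \le \sum_{i<j} |V_i||V_j| = \tfrac{1}{2}(n^2 - \sum_i |V_i|^2)$ with the lower bound on $e(G)$:
\[
\tfrac{1}{2}\bigl(n^2 - \textstyle\sum_i |V_i|^2\bigr) + \epsilon_1 n^2 \;\ge\; e(G) \;\ge\; e(T_{n,r}) - \epsilon_1 n^2 \;\ge\; \tfrac{(r-1)n^2}{2r} - \tfrac{r}{8} - \epsilon_1 n^2,
\]
which rearranges to $\sum_i |V_i|^2 \le n^2/r + 4\epsilon_1 n^2 + O(1)$. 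Writing $|V_i| = n/r + d_i$ with $\sum d_i = 0$, this becomes $\sum d_i^2 \le 5\epsilon_1 n^2$ for large $n$, so $|d_i| \le \sqrt{5\epsilon_1}\, n < 3\sqrt{\epsilon}\, n$ once $\epsilon_1 \le \epsilon$. This yields the claimed range for $|V_i|$.

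I do not expect any serious obstacle: everything follows by bookkeeping from Corollary \ref{coro22} and the variational characterization of the optimal partition. The only subtlety is getting the constants aligned so that a single choice of $\epsilon_1$ (in terms of $\epsilon$) simultaneously delivers all three estimates; tracking the factors shows that $\epsilon_1 = \epsilon$ is already enough since $\sqrt{5} < 3$.
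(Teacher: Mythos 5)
Your proposal is correct and follows essentially the same route as the paper: both derive the approximate structure from Lemma~\ref{lem32} together with Corollary~\ref{coro22}, pass to the partition maximizing $\sum_{i<j}e(V_i,V_j)$ and observe it can only shrink the intra-part edge count, and close with a quadratic bound relating $e(G)$ to $\sum_i|V_i|^2$. The only deviation is in the final balancedness step: you directly bound $\sum_i d_i^2 \le 5\epsilon n^2$ where $d_i = |V_i|-n/r$, whereas the paper isolates the worst deviation $s=\max_i |d_i|$ and applies Cauchy--Schwarz to the remaining $r-1$ parts, landing on $\tfrac{r}{2(r-1)}s^2 < 3\epsilon n^2$; your version is a touch cleaner and yields a marginally better constant ($\sqrt{5}$ vs.\ $\sqrt{6}$), both of which are safely below the required $3$.
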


\begin{proof} From Lemma \ref{lem32} and Corollary \ref{coro22}, it follows that $G$ is obtained from $T_{n,r}$ by adding or deleting at most $\epsilon n^2$ edges for large enough $n$. Then there is a partition of $V(G) = U_1 \cup \ldots \cup U_{r}$ with $\sum_{i=1}^{r} e(U_i) \leq \epsilon n^2$, $\sum_{1\leq i<j\leq r}e(U_i,U_j)\geq  e(T_{n,r}) - \epsilon n^2$ and $\lfloor\frac{n}{r}\rfloor \leq |U_i| \leq \lceil\frac{n}{r}\rceil$ for each $i \in [r]$. So  $e(G) \geq e(T_{n,r}) - \epsilon n^2$. Furthermore,
$G$ has a  partition $V = V_1 \cup \ldots \cup V_{r}$ such that $\sum_{1\leq i<j\leq r}e(V_i,V_j)$ attains the maximum. In this case, $\sum_{i=1}^{r} e(V_i) \leq \sum_{i=1}^{r} e(U_i) \leq \epsilon n^2$ and $\sum_{1\leq i<j\leq r}e(V_i,V_j)\geq\sum_{1\leq i<j\leq r}e(U_i,U_j)\geq e(T_{n,r}) - \epsilon n^2$. Let $s=\max\left\{\left||V_j|-\frac{n}{r}\right|, j\in [r]\right\}$.  Without loss of generality, we assume $\left||V_1|-\frac{n}{r}\right|=s$. Then
\begin{eqnarray*}
e(G)&\leq& \sum_{1\leq i<j\leq r}|V_i||V_j|+\sum_{i=1}^{r}e(V_i)\nonumber\\[2mm]
&\leq & |V_1|(n-|V_1|)+ \sum_{2\leq i<j\leq r}|V_i||V_j| +\epsilon n^2\nonumber\\
&=& |V_1|(n-|V_1|)+ \frac{1}{2}\Big((\sum_{j=2}^{r}|V_j|)^2-\sum_{j=2}^{r}|V_j|^2\Big) +\epsilon n^2\nonumber\\[2mm]
&\leq& |V_1|(n-|V_1|)+\frac{1}{2}(n-|V_1|)^2-\frac{1}{2(r-1)}(n-|V_1|)^2+\epsilon n^2\\[2mm]
&<&-\frac{r}{2(r-1)}s^2+\frac{r-1}{2r}n^2+\epsilon n^2,
\end{eqnarray*}
where  the last second inequality  holds by H\"{o}lder's inequality, and  the last inequality holds since $\left||V_1|-\frac{n}{r}\right|=s$. On the other hand, $$e(G)\geq e(T_{n,r}) - \epsilon n^2\geq \left(1-\frac{1}{r}\right) \frac{n^2}{2} - \frac{r}{8}-\epsilon n^2> \frac{r-1}{2r}n^2-2\epsilon n^2,$$
as $n$ is large enough. Therefore,
$\frac{r}{2(r-1)}s^2<3\epsilon n^2$, which implies that $s<\sqrt{\frac{6(r-1)\epsilon}{r}n^2}<\sqrt{6\epsilon}n< 3\sqrt{\epsilon}n$. The proof is completed.
\end{proof}

\begin{lemma}
\label{lem size of W and L}
Let $\theta>0$ and $\epsilon>0$ be sufficiently small constants with $\theta< \frac{1}{100r^5\ell}$ and $2\epsilon< \theta^3$.
We denote
\begin{equation}
\label{eqn defn of W}
    W := \cup_{i = 1}^{r} \{v \in V_i |\ d_{V_i}(v) \geq 2\theta n \},
\end{equation}
and
\begin{equation}
\label{eqn defn of L}
    L := \bigg\{v \in V(G) |\ d(v) \leq \bigg( 1 - \frac{1}{r} - 3r\epsilon^{\frac{1}{3}} \bigg)n\bigg\}.
\end{equation}
Then $|L| \leq \epsilon^{\frac{1}{3}} n$ and $W \subseteq L$.

\end{lemma}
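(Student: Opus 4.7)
The plan is to prove the two assertions separately.

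For $|L|\le\epsilon^{1/3}n$, I carry out a double count of the cross non-edges of $G$ relative to the partition $V_1,\ldots,V_r$. Writing $n_i=|V_i|$, Tur\'an's theorem applied to the complete multipartite graph $K_r(n_1,\ldots,n_r)$ yields $\sum_{i<j} n_i n_j \le e(T_{n,r})$. Combined with $e(G)\ge e(T_{n,r})-\epsilon n^2$ and $\sum_i e(V_i)\le \epsilon n^2$ from Lemma~\ref{approximate structure}, this gives
\[
\sum_{1\le i<j\le r}\bigl(n_i n_j - e(V_i,V_j)\bigr)\le 2\epsilon n^2.
\]
Summing vertex-wise so that each cross non-edge is counted twice, the total ``cross non-neighbours'' summed over $v$ is at most $4\epsilon n^2$. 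For $v\in L\cap V_i$, plugging in $d(v)\le(1-\tfrac{1}{r}-3r\epsilon^{1/3})n$, $n_i\le(\tfrac{1}{r}+3\sqrt\epsilon)n$, and the trivial $d_{V_i}(v)\ge 0$, the contribution of $v$ is at least $(3r\epsilon^{1/3}-3\sqrt\epsilon)n\ge 2r\epsilon^{1/3}n$ once $\epsilon$ is small. Hence $|L|\cdot 2r\epsilon^{1/3}n\le 4\epsilon n^2$, i.e.\ $|L|\le 2\epsilon^{2/3}n/r\le\epsilon^{1/3}n$.

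For $W\subseteq L$, I argue by contradiction: suppose $v\in W\setminus L$ with $v\in V_i$, and produce a copy of $F$ in $G$. The first step uses partition optimality: reassigning $v$ from $V_i$ to $V_j$ changes $\sum_{i<j}e(V_i,V_j)$ by $d_{V_i}(v)-d_{V_j}(v)$, which must be $\le 0$, so $d_{V_j}(v)\ge d_{V_i}(v)\ge 2\theta n$ for every $j\ne i$. Thus $v$ has at least $2\theta n$ neighbours in each part, and because $v\notin L$ its cross degree $d(v)-d_{V_i}(v)$ is also near the maximum $n-n_i$, leaving $v$ with at most $o_\epsilon(n)$ cross non-neighbours.

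The second step embeds $K_{r+1}(\ell,\ldots,\ell)\supseteq F$ in $G$ containing $v$, which suffices since $\chi(F)=r+1$ and $\ell\gg|V(F)|$. Setting $S_j=N(v)\cap V_j$ (size $\ge 2\theta n$), the internal edge count in each $S_j$ is at most $e(V_j)\le\epsilon n^2$, while the non-edge count between any $S_j,S_k$ is at most $n_jn_k-e(V_j,V_k)\le 2\epsilon n^2$. Using $2\epsilon<\theta^3$, a standard greedy/probabilistic argument (expected numbers of ``bad'' pairs in a random $\ell$-tuple are $o(1)$) extracts subsets $T_j\subseteq S_j$ of size $\ell$ that are each independent and pairwise completely joined. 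It remains to enlarge $\{v\}$ to an $\ell$-element independent set $T_0\subseteq V_i$ every member of which is completely joined to each $T_j$; here the hypothesis $v\notin L$ (few cross non-neighbours of $v$), the bound $|L|\le\epsilon^{1/3}n$ from Claim~1, and Lemma~\ref{inclusion exclusion lemma} together guarantee enough common neighbours to make this selection. The resulting $K_{r+1}(\ell,\ldots,\ell)\subseteq G$ contains $F$, a contradiction.

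The main obstacle is the last step of the embedding: constructing the part $T_0$ of size $\ell$ in $V_i$ so that its members are simultaneously non-adjacent to $v$, pairwise non-adjacent, and completely joined to every $T_j$. It is precisely the hypothesis $v\notin L$ (not merely $v\in W$) that supplies $v$'s near-full cross degree and hence the combinatorial room needed to balance the $(r+1)$-partite structure, distinguishing $v\in W\setminus L$ (impossible) from $v\in W\cap L$ (permitted).
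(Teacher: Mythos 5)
Your proof of $|L|\le\epsilon^{1/3}n$ is correct and genuinely different from the paper's: you double-count cross non-edges against the defect $\sum_{i<j}(n_in_j-e(V_i,V_j))\le 2\epsilon n^2$, whereas the paper deletes a putative large $L'\subseteq L$ and observes that $G[V\setminus L']$ would then have more than $\mathrm{ex}(n',F)$ edges. Your route is cleaner, but note that the paper also establishes $|W|<\theta n$ (by a one-line degree-sum count) and uses it later, both to ensure $|V_i\setminus(W\cup L)|\ge\ell$ and in the closing contradiction $d_{V_1}(u_0)\le|W|+|L|+a<2\theta n$; your write-up never proves this and so your final step would need it supplied.

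The $W\subseteq L$ argument has a genuine gap. First, the assertion that $v\notin L$ gives $v$ at most $o_\epsilon(n)$ cross non-neighbours is false precisely because $v\in W$: the count is $(n-n_i-d(v))+d_{V_i}(v)\ge 2\theta n - O(\epsilon^{1/3})n$, a constant fraction of $n$. Second, and more seriously, the target $K_{r+1}(\ell,\ldots,\ell)$ containing $v$ forces two of the $\ell$-classes ($T_0$ and $T_i$) to lie inside the same part $V_i$ and be completely joined, i.e.\ a $K_{\ell,\ell}$ inside $G[V_i]$. Nothing in the hypotheses supplies this: $G[V_i]$ has at most $\epsilon n^2$ edges and, apart from the $\ge 2\theta n$ edges at $v$, could essentially be a star centred at $v$, in which case no vertex of $V_i$ other than $v$ is adjacent to even one vertex of $T_i$, let alone all $\ell$ of them. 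The paper sidesteps both problems by aiming at a weaker structure: it builds $K_r(\ell+1,\ell,\ldots,\ell)$ on $u_0$ and $\ell$ vertices from each $V_j\setminus(W\cup L)$, and then only needs $u_0$ to have $a+1$ neighbours inside $V_1\setminus(W\cup L)$; those $a+1$ extra intra-class edges push the edge count past $e(T_{r\ell+1,r})+a=\mathrm{ex}(r\ell+1,F)$, forcing a copy of $F$ by the defining property of the Tur\'an number rather than by embedding an explicit $(r+1)$-partite graph. If you want to repair your argument you would need to replace your embedding target with this ``Tur\'an graph plus $a+1$ edges'' structure.
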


\begin{proof}
We first prove the following claims.
\vskip 2mm
\noindent{\bfseries Claim 1.}  $|W|<\theta n $

\begin{proof}
It follows  from Lemma \ref{approximate structure} that $\sum_{i=1}^{r}e(V_i)\leq \epsilon n^2$. On the other hand, let $W_i : = W \cap V_i$ for all $i \in [r]$. Then
\[2 e(V_i) = \sum_{u\in V_i}d_{V_i}(u) \geq \sum_{u\in W_i}d_{V_i}(u) \geq 2|W_i|\theta n\]
Thus \[\sum_{i=1}^{r}e(V_i) \geq \sum_{i=1}^{r} |W_i|\theta n = |W|\theta n.\]
Therefore, we have that $|W|\theta n \leq \epsilon n^2$. This proves that $|W|\leq \frac{\epsilon n}{\theta} < \theta n.$
\end{proof}

\noindent{\bfseries Claim 2.} $|L| \leq \epsilon^{\frac{1}{3}} n$.

\begin{proof}
Suppose to the contrary that $|L| > \epsilon^{\frac{1}{3}} n$. Then there exists a subset $L' \subseteq L$ with $|L'| = \lfloor \epsilon^{\frac{1}{3}} n \rfloor$.
Therefore,
\begin{equation*}
    \begin{split}
 e(G [V\setminus L']) \geq e(G) - \sum_{v \in L'}d(v)
 & \geq e(T_{n,r}) - \epsilon n^2 - \epsilon^{\frac{1}{3}} n^2 \bigg(1 - \frac{1}{r} - 3r\epsilon^{\frac{1}{3}}\bigg)\\
& > \frac{(n - \lfloor \epsilon^{\frac{1}{3}} n \rfloor)^2}{2} \bigg(1-\frac{1}{r}\bigg) + a \\
& \geq e(T_{n',r}) + a= \mathrm{ex}(n',F),
    \end{split}
\end{equation*}
where $n'=n - \lfloor \epsilon^{\frac{1}{3}} n \rfloor$ and $n$ is large enough. However, $ e(G [V\setminus L']) > \mathrm{ex}(n',F)$ implies that $G [V\setminus L']$ contains an $F$, which contradicts that $G$ is $F$-free.
\end{proof}

\vskip 2mm
Next, we prove that  $W \subseteq L$.
Otherwise, there exists a vertex $u_0 \in W$ and $u_0 \notin L$. Without loss of generality, let $u_0\in V_1$. Since
 $V(G) = V_1 \cup \ldots \cup V_{r}$ is the   partition
  such that $\sum_{1\leq i<j\leq r}e(V_i,V_j)$ attains the maximum,  $d_{V_1}(u_0)\le d_{V_i}(u_0)$ for each $i\in [2,r]$. %Indeed, otherwise, we can move the vertex $u_0$ into some part $V_i$ and strictly increase the number of edges between $V_1$ and $V_i$.
Thus $d(u_0)\ge rd_{V_1}(u_0)$, that is $d_{V_1}(u_0)\leq \frac{1}{r}d(u_0)$. On the other hand,
since $u_0 \not\in L$, we get $d(u_0)> (1-\frac{1}{r}-3r\epsilon^{\frac{1}{3}} )n$. Thus
\begin{align}
 d_{V_2}(u_0)\nonumber
 &\ge d(u_0) - d_{V_1}(u_0) - (r-2)\left(\frac{1}{r} + 3\sqrt{\epsilon} \right)n\\\nonumber
 & \ge \left(1- \frac{1}{r}\right) d(u_0) - (r-2)\left(\frac{1}{r} + 3\sqrt{\epsilon} \right)n\\\label{degreeu0}
 & > \frac{n}{r^2} - 3(r-1) \epsilon^{\frac{1}{3}} n - 3(r-2) \sqrt{\epsilon} n\\\nonumber
 & >  \frac{n}{r^2}-6r\epsilon^{\frac{1}{3}} n.
\end{align}
Recall from Claim 1 and Claim 2 that
$  |W| <{\theta} n$ and $ |L|\le \epsilon^{\frac{1}{3}}n$, hence, for any $i\in [r]$ and sufficiently large $n$, we have
\begin{equation*}
  |V_i\setminus (W\cup L)|
  \ge \left(\frac{1}{r}- 3\sqrt{\epsilon} \right)n-\theta n -\epsilon^{\frac{1}{3}} n \ge  \ell.
\end{equation*}

We claim that $u_0$ is adjacent to at most $a$ vertices in $V_1\setminus (W\cup L)$.
Otherwise, let $u_{1,1}, u_{1,2}, \ldots, u_{1,a+1}$ be the neighbors of $u_0$ in $V_1\setminus (W\cup L)$.
Let $u_{1,a+2}, \ldots, u_{1,\ell}$ be another $\ell-a-1$ vertices in $V_1\setminus (W\cup L)$. For any $j\in [\ell]$, since $u_{1,j}\not\in L$ and $u_{1,j}\not\in W$,  we have $ d(u_{1,j})> \left(1-\frac{1}{r}-3r\epsilon^{\frac{1}{3}} \right)n$, and $d_{V_1}(u_{1,j})< 2\theta n$. Thus,
\begin{align}
d_{V_2}(u_{1,j})\nonumber
& \ge d(u_{1,j})-d_{V_1}(u_{1,j})- (r-2) \left(\frac{1}{r}+ 3\sqrt{\epsilon} \right)n  \\\label{degreeu}
& > \frac{n}{r} - 3r\epsilon^{\frac{1}{3}} n - 2\theta n - 3(r-2)\sqrt{\epsilon} n  \\\nonumber
& > \frac{n}{r} - 6r\epsilon^{\frac{1}{3}} n - 2\theta n .
\end{align}
By Lemma~\ref{inclusion exclusion lemma}, we consider the common neighbors of $u_0,u_{1,1}, \ldots ,u_{1,\ell}$ in $V_2$,
\begin{eqnarray*}
&& \left| N_{V_2}(u_0) \cap N_{V_2}(u_{1,1})\cap  \cdots \cap N_{V_2}(u_{1,\ell}) \setminus (W\cup L) \right| \\
 &\ge& d_{V_2}(u_0) + \sum_{j=1}^{\ell} d_{V_2}(u_{1,j})- \ell \left| V_2 \right| - |W| - |L| \\
 &>&  \frac{n}{r^2}-6r\epsilon^{\frac{1}{3}} n + \ell\left(\frac{n}{r} - 6r\epsilon^{\frac{1}{3}} n - 2\theta n\right) - \ell\left(\frac{n}{r}+3\sqrt{\epsilon}n\right) - \theta n - \epsilon^{\frac{1}{3}} n\\
& > & \frac{n}{r^2} - 16r\ell\epsilon^{\frac{1}{3}} n - (2\ell+1)\theta n   > \ell,
\end{eqnarray*}
for sufficiently large $n$. This implies that there exist $\ell$ vertices $u_{2,1},u_{2,2}, \ldots ,u_{2,\ell}$  in $V_2\setminus(W\cup L)$ such that $\{u_0,u_{1,1}, \ldots, u_{1,\ell}\}$ and $\{u_{2,1}, \ldots, u_{2,\ell}\}$ induce a complete bipartite graph. For an integer $s$ with $2\leq s\leq r-1$, suppose that for any $1\leq i\leq s$, there exist $u_{i,1},u_{i,2}, \ldots ,u_{i,\ell}\in V_i\setminus (W\cup L)$ such that $\{u_0,u_{1,1}, \ldots, u_{1,\ell}\}$, $\{u_{2,1}, \ldots, u_{2,\ell}\}$, $\ldots$, $\{u_{s,1}, \ldots, u_{s,\ell}\}$ induce a complete $s$-partite graph. We next consider the common neighbors of these vertices in $V_{s+1}$. Similarly, by  (\ref{degreeu0}) and (\ref{degreeu}), we get that for each $i\in [s]$ and $j\in [\ell]$,
\begin{align*}
d_{V_{s+1}}(u_0)
 %&\ge d(u_0) - d_{V_1}(u_0) - (r-2)\left(\frac{1}{r} + 3\sqrt{\epsilon} \right)n\\
 %& > \frac{n}{r^2} - \frac{3(r-1)}{r} \epsilon^{\frac{1}{3}} n - 3(r-2) \sqrt{\epsilon} n\\
 & >  \frac{n}{r^2}-6r\epsilon^{\frac{1}{3}} n,
\end{align*}
and
\begin{align*}
d_{V_{s+1}}(u_{i,j})
%& \ge d(u_{i,j})-d_{V_1}(u_{i,j})- (r-2) \left(\frac{1}{r}+ 3\sqrt{\epsilon} \right)n  \\
%& > \frac{n}{r} - 3\epsilon^{\frac{1}{3}} n - 2\theta n - 3(r-2)\sqrt{\epsilon} n  \\
& > \frac{n}{r} - 6r\epsilon^{\frac{1}{3}} n - 2\theta n .
\end{align*}
By Lemma~\ref{inclusion exclusion lemma} again, we can obtain
\begin{eqnarray*}
&& \left| N_{V_{s+1}}(u_0) \cap \left(\cap_{i\in [s],j\in [\ell]}N_{V_{s+1}}(u_{i,j}) \right) \setminus (W\cup L) \right| \\
&\ge& d_{V_{s+1}}(u_0) + \sum_{i\in [s], j\in [\ell]} d_{V_{s+1}}(u_{i,j}) - s\ell \left| V_{s+1} \right| - |W| - |L| \\
&>& \frac{n}{r^2}-6r\epsilon^{\frac{1}{3}} n + s\ell\left(\frac{n}{r} - 6r\epsilon^{\frac{1}{3}} n - 2\theta n\right) - s\ell\left(\frac{n}{r}+3\sqrt{\epsilon}n\right) - \theta n - \epsilon^{\frac{1}{3}} n\\
& > & \frac{n}{r^2} - 16sr\ell\epsilon^{\frac{1}{3}} n - (2s\ell+1)\theta n   > \ell,
\end{eqnarray*}
where $n$ is sufficiently large. Hence there exist $\ell$ vertices $u_{s+1,1},u_{s+1,2}, \ldots ,u_{s+1,\ell}\in V_{s+1}\setminus (W\cup L)$ such that $\{u_0,u_{1,1},\ldots,u_{1,\ell}\}$, $\ldots$, $\{u_{s+1,1}$, $\ldots$, $u_{s+1,\ell}\}$ induce a complete $(s+1)$-partite graph. Thus, for each $i \in [2,r]$, there exist $u_{i,1},u_{i,2}, \ldots ,u_{i,\ell}$ in $V_i \setminus (W\cup L)$ such that
$\{u_0,u_{1,1},\ldots,u_{1,\ell}\}$, $\{u_{2,1},\ldots,u_{2,\ell}\}$, $\ldots$, $\{u_{r,1},\ldots,u_{r,\ell}\}$ induce a complete $r$-partite graph. Let $G'$ be the graph induced by $\{u_0,u_{1,1},\ldots,u_{1,\ell}\}$, $\ldots$, $\{u_{r,1}, \ldots ,u_{r,\ell}\}$. Since $u_0$ is adjacent to $u_{1,1},\ldots,u_{1,a+1}$, then $e(G')> e(T_{r\ell+1,r})+a$, by the definition of Tur\'{a}n number, $G'$ contains an $F$, this is a contradiction. Therefore $u_0$ is adjacent to at most $a$ vertices in  $V_1\setminus (W\cup L)$.
Hence
\begin{eqnarray*}
d_{V_1}(u_0)&\le & |W|+|L|+a\\
&<&   \theta  n + \epsilon^{\frac{1}{3}} n +a \\
&<& 2\theta n,
\end{eqnarray*}
for sufficiently large $n$.  This is a contradiction to the fact that $u_0\in W$.
Hence $W\subseteq L$.

\end{proof}

\begin{lemma} \label{largeindependent}
For each $i\in [r]$,
$$e(G[V_i\setminus L])\leq a.$$ Furthermore, for each $i\in [r]$, there exists an independent set $I_i \subseteq V_i\setminus L$ such that $$|I_i| \geq |V_i|-  \epsilon^{\frac{1}{3}} n- a.$$
\end{lemma}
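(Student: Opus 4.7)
The plan is to prove the edge bound $e(G[V_i\setminus L])\le a$ by contradiction, mimicking the common-neighbor construction in the proof of Lemma \ref{lem size of W and L}. Suppose some $V_i\setminus L$ contains at least $a+1$ edges, and let $S\subseteq V_i\setminus L$ be the set of their endpoints, so $|S|\le 2(a+1)<\ell$. Since $W\subseteq L$ by Lemma \ref{lem size of W and L}, we have $V_i\setminus L=V_i\setminus (W\cup L)$, and the size estimate $|V_i\setminus (W\cup L)|\ge \ell$ already established in the previous proof lets us extend $S$ to a set $T_i\subseteq V_i\setminus (W\cup L)$ with $|T_i|=\ell$. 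Every $v\in T_i$ satisfies $d(v)>(1-\tfrac{1}{r}-3r\epsilon^{1/3})n$ since $v\notin L$ and $d_{V_i}(v)<2\theta n$ since $v\notin W$; together with the size bounds on each $V_j$ from Lemma \ref{approximate structure}, this gives $d_{V_j}(v)>\tfrac{n}{r}-6r\epsilon^{1/3}n-2\theta n$ for every $j\ne i$, exactly as in inequality (\ref{degreeu}).

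Next I would iteratively build sets $T_{j}\subseteq V_{j}\setminus (W\cup L)$ of size $\ell$ for each $j\in [r]\setminus\{i\}$ so that $T_1\cup\cdots\cup T_r$ induces a complete $r$-partite graph with parts $T_1,\ldots,T_r$. At the stage where $T_i$ and some $T_{j_1},\ldots,T_{j_s}$ have been constructed and we wish to choose $T_{j_{s+1}}$, Lemma \ref{inclusion exclusion lemma} applied to the neighborhoods inside $V_{j_{s+1}}$ yields
\[
\Bigl|\bigcap_{v\in T_i\cup T_{j_1}\cup\cdots\cup T_{j_s}} N_{V_{j_{s+1}}}(v)\setminus (W\cup L)\Bigr|\ge (s+1)\ell\Bigl(\tfrac{n}{r}-6r\epsilon^{1/3}n-2\theta n\Bigr)-s\ell\Bigl(\tfrac{n}{r}+3\sqrt{\epsilon}n\Bigr)-|W|-|L|,
\]
which exceeds $\ell$ for $n$ large, using $|W|<\theta n$ and $|L|\le \epsilon^{1/3}n$. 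After $r-1$ such steps we obtain the desired complete $r$-partite subgraph on $r\ell$ vertices. Since $S\subseteq T_i$ contains at least $a+1$ internal edges, the graph induced on $T_1\cup\cdots\cup T_r$ has at least $e(T_{r\ell,r})+(a+1)>\mathrm{ex}(r\ell,F)$ edges, hence contains a copy of $F$, contradicting the $F$-freeness of $G$.

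For the second statement, given $e(G[V_i\setminus L])\le a$, pick one endpoint from each such edge to form a vertex cover $C_i\subseteq V_i\setminus L$ with $|C_i|\le a$. Then $I_i:=(V_i\setminus L)\setminus C_i$ is independent, and since $|V_i\setminus L|\ge |V_i|-|L|\ge |V_i|-\epsilon^{1/3}n$ by Claim 2 of Lemma \ref{lem size of W and L}, we conclude $|I_i|\ge |V_i|-\epsilon^{1/3}n-a$.

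The main obstacle is bookkeeping: one has to verify that the error terms accumulated across the $r-1$ iterative steps (factors growing like $s\ell$) remain absorbable into the dominant $\ell n/r$ term, which is ensured by the hierarchy $2\epsilon<\theta^{3}$ and $\theta<1/(100r^{5}\ell)$ chosen in Lemma \ref{lem size of W and L}. Once this is set up, the construction is essentially the same template as the one used to establish $W\subseteq L$, so no new ideas are required beyond carefully tracking the constants.
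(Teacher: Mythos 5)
Your proposal is correct and follows essentially the same route as the paper: pick $\ell$ vertices of $V_i\setminus L$ spanning at least $a+1$ edges, iteratively grow a complete $r$-partite subgraph of size $r\ell$ via Lemma \ref{inclusion exclusion lemma}, and conclude the induced graph has more than $\mathrm{ex}(r\ell,F)$ edges; the second part via a vertex cover is also identical. One small slip: in your displayed inclusion-exclusion inequality the coefficient of $\bigl(\tfrac{n}{r}+3\sqrt{\epsilon}n\bigr)$ should be $(s+1)\ell-1$ rather than $s\ell$ (there are $(s+1)\ell$ vertices whose neighborhoods you intersect), but since the corrected bound is still $\tfrac{n}{r}-O(\ell\epsilon^{1/3}n)-O(\ell\theta n)>\ell$ under the chosen hierarchy of constants, the argument goes through unchanged.
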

\begin{proof}  Suppose to the contrary that there exists an $i_0\in [r]$ such that $e(G[V_{i_0}\setminus L])> a.$ Without loss of generality, we may assume that $e(G[V_1\setminus L])> a.$
By Lemmas \ref{approximate structure} and \ref{lem size of W and L}, we have $|V_i\setminus L|\geq \left(\frac{1}{r}- 3\sqrt{\epsilon} \right)n -\epsilon^{\frac{1}{3}} n \ge  \ell$ for any $i \in [r]$. Let $u_{1,1},u_{1,2}, \ldots, u_{1,\ell} $ be $\ell$ vertices chosen   from $V_1 \setminus L$ such that the induced subgraph of $\{u_{1,1},u_{1,2}, \ldots, u_{1,\ell}\} $ in $G$ contains at least $a+1$ edges. For any $j\in [\ell]$, $u_{1,j}\notin L$ implies that $u_{1,j}\notin W$ by Lemma~\ref{lem size of W and L}, thus $d(u_{1,j})> \left(1- \frac{1}{r} - 3r\epsilon^{\frac{1}{3}} \right)n$, and $d_{V_1}(u_{1,j})< 2\theta n$. Then  we have
\begin{align}
d_{V_2}(u_{1,j})\nonumber
& \ge d(u_{1,j})-d_{V_1}(u_{1,j})- (r-2) \left(\frac{1}{r}+ 3\sqrt{\epsilon} \right)n  \\\label{degreeu2}
& > \frac{n}{r} - 3r\epsilon^{\frac{1}{3}} n - 2\theta n - 3(r-2)\sqrt{\epsilon} n  \\\nonumber
& > \frac{n}{r} - 6r\epsilon^{\frac{1}{3}} n - 2\theta n .
\end{align}
 %\begin{align*}
% d_{V_2}(u_{1,j}) & \ge \frac{n}{r} - 6r\epsilon^{\frac{1}{3}} n - 2\theta n .
 % \end{align*}
Applying Lemma~\ref{inclusion exclusion lemma}, we  get
    \begin{eqnarray*}
&& \left| N_{V_2}(u_{1,1})  \cap N_{V_2}(u_{1,2})\cap  \cdots \cap N_{V_2}(u_{1,\ell}) \setminus L \right| \\
 &\ge&\sum_{j=1}^{\ell} d_{V_2}(u_{1,j}) - (\ell-1) \left| V_2 \right|  - |L| \\
 &\ge&  \ell\left(\frac{n}{r} - 6r\epsilon^{\frac{1}{3}} n - 2\theta n  \right)
  - (\ell-1) \left(\tfrac{1}{r} + 3\sqrt{\epsilon} \right)n - \epsilon^{\frac{1}{3}} n \\
& > & \frac{n}{r} - 10r\ell\epsilon^{\frac{1}{3}} n-2\ell\theta n   > \ell,
\end{eqnarray*}
 for  sufficiently  large $n$. So there exist $\ell$ vertices $u_{2,1},u_{2,2}, \ldots, u_{2,\ell}\in V_2$
such that $\{u_{1,1}, \ldots, u_{1,\ell}\}$ and $\{u_{2,1},\ldots, u_{2,\ell}\}$ induce a complete bipartite graph. For an integer $s$ with $2\leq s\leq r-1$, suppose that for any $1\leq i\leq s$, there exist $u_{i,1},u_{i,2}, \ldots ,u_{i,\ell}\in V_i\setminus L$ such that $\{u_{1,1}, \ldots, u_{1,\ell}\}$, $\{u_{2,1},\ldots, u_{2,\ell}\}$, $\ldots$, $\{u_{s,1},\ldots, u_{s,\ell}\}$ induce
 a complete $s$-partite subgraph in $G$. We next consider the common neighbors of these vertices in $V_{s+1}$. Similarly, by  (\ref{degreeu2}),  we get that for each $i\in [s]$ and $j\in [\ell]$,
   \begin{align*}
    d_{V_{s+1}}(u_{i,j})
   & \ge \frac{n}{r} - 6r\epsilon^{\frac{1}{3}} n - 2\theta n .
  \end{align*}
By Lemma~\ref{inclusion exclusion lemma} again, we can obtain
\begin{eqnarray*}
&& \left| \left(\cap_{i\in [s],j\in [\ell]}N_{V_{s+1}}(u_{i,j}) \right)\setminus L \right| \\
&\ge&   \sum_{i\in [s], j\in [\ell]} d_{V_{s+1}}(u_{i,j})- (s\ell-1) \left| V_{s+1} \right|   - |L| \\
&\ge&   s\ell \left(\frac{n}{r} - 6r\epsilon^{\frac{1}{3}} n - 2\theta n\right)
  -  (s\ell-1) \left(\tfrac{1}{r} + 3\sqrt{\epsilon} \right)n  - \epsilon^{\frac{1}{3}} n \\
& > & \frac{n}{r} - 10rs\ell\epsilon^{\frac{1}{3}} n-2s\ell\theta n   > \ell,
\end{eqnarray*}
for sufficiently large $n$.
Thus there exist $\ell$ vertices $u_{s+1,1},u_{s+1,2}, \ldots ,u_{s+1,\ell}\in V_{s+1}\setminus L$ such that $\{u_{1,1}, \ldots, u_{1,\ell}\}$, $\{u_{2,1},\ldots, u_{2,\ell}\}$, $\ldots$, $\{u_{s+1,1},\ldots, u_{s+1,\ell}\}$ induce a complete $(s+1)$-partite subgraph in $G$. Therefore, for each $i \in [2,r]$, there exist $u_{i,1},u_{i,2}, \ldots ,u_{i,\ell}$ in $V_i \setminus L$ such that
$\{u_{1,1},\ldots,u_{1,\ell}\}$, $\{u_{2,1},\ldots,u_{2,\ell}\}$, $\ldots$, $\{u_{r,1},\ldots,u_{r,\ell}\}$ induce a complete $r$-partite graph. Let $G'$ be the graph induced by $\{u_{1,1},\ldots,u_{1,\ell}\}$, $\ldots$, $\{u_{r,1}, \ldots ,u_{r,\ell}\}$. Then $e(G')> e(T_{r\ell,r})+a$, which implies that $G'$ contains a copy of  $F$, this is a contradiction. Thus  for each $i \in [r]$, $e(G[V_i\setminus L])\leq a$.

  Therefore, the subgraph obtained from $G[V_i \setminus L]$ by deleting one vertex of each edge in $G[V_i\setminus L]$ contains no edges, which is an independent set of $G[V_i\setminus L]$. Therefore, for each $i \in [r]$, there exists an independent set
$I_i\subseteq V_i$ such that
\begin{align*}
|I_i| &\ge |V_i\setminus L|-a \ge |V_i|-  \epsilon^{\frac{1}{3}} n- a .
\end{align*}
\end{proof}

\begin{lemma}
\label{eigenvector entries}
$L$ is empty and $e(G[V_i])\leq a$ for each $i\in [r]$.
\end{lemma}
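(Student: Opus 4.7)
Once $L = \emptyset$ is established, the edge bound $e(G[V_i]) \leq a$ follows at once from Lemma~\ref{largeindependent} since then $V_i \setminus L = V_i$. So the whole task is to show $L$ is empty, and I would do this by contradiction via a twin-switching argument.

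Assume $v_0 \in L$. As a preliminary, the eigenequation $\lambda(G) = \sum_{u \sim z} x_u \leq d(z)$ together with Lemma~\ref{lem32} yields $d(z) > (1-\tfrac{1}{r}-3r\epsilon^{1/3})n$, hence $z \notin L$. Now let $G^*$ be the graph obtained from $G$ by deleting the edges at $v_0$ and inserting $\{v_0 u : u \in N_G(z) \setminus \{v_0\}\}$, so that $v_0$ becomes a non-adjacent twin of $z$ in $G^*$. Plugging $\mathbf{x}$ into~(\ref{Rayleigh}) gives
\[
\mathbf{x}^T A(G^*)\mathbf{x} - \mathbf{x}^T A(G)\mathbf{x} = 2\lambda(G)\,x_{v_0}(1-x_{v_0}) - 2x_{v_0}^2\,\mathbf{1}[v_0 \in N(z)],
\]
and this is strictly positive for $n$ large because $v_0 \in L$ forces $x_{v_0} \leq d(v_0)/\lambda(G) < 1 - c\epsilon^{1/3}$ for some positive constant $c = c(r)$, while $\lambda(G)$ is of order $n$. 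Therefore $\lambda(G^*) > \lambda(G)$, and it remains to show that $G^*$ is $F$-free, contradicting the spectral extremality of $G$.

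Any copy of $F$ in $G^*$ must use $v_0$, for otherwise the copy already lies in $G$. If the copy avoids $z$, replacing $v_0$ with $z$ yields $F \subseteq G$ since $N_{G^*}(v_0) \subseteq N_G(z)$. The hard case is when both $v_0$ and $z$ appear in the copy; then $v_0 z \notin E(F)$ (because $v_0 z \notin E(G^*)$). Letting $w_1,\dots,w_s$ be the remaining vertices of the copy and $A, B \subseteq N_G(z)$ the copy-neighborhoods of $v_0$ and $z$, I would find distinct $v_1, v_2 \in V(G) \setminus \{w_1,\dots,w_s\}$ with $A \subseteq N_G(v_1)$ and $B \subseteq N_G(v_2)$; substituting $v_0 \mapsto v_1$ and $z \mapsto v_2$ would then produce $F \subseteq G$, the desired contradiction.

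The main obstacle is the existence of such $v_1, v_2$, which will consume the bulk of the work. Four ingredients will go in: $|A|, |B| \leq |V(F)|$ are constants; by Lemma~\ref{largeindependent}, $|N_G(z) \cap V_{i_z}|$ (where $z \in V_{i_z}$) is at most $a + |L \cap V_{i_z}|$, bounding how many vertices of $A \cup B$ can sit inside $V_{i_z}$; for $w \in N_G(z) \setminus (W \cup L)$ outside $V_{i_z}$, the cross-degree $d_{V_j}(w)$ in every other part $V_j$ is at least $n/r - O(\sqrt{\epsilon})n$ by Lemmas~\ref{approximate structure} and~\ref{lem size of W and L}; and Lemma~\ref{inclusion exclusion lemma} then delivers a common neighborhood of $A$ (resp. $B$) inside a suitable part of linear size in $n$, from which $v_1, v_2$ avoiding the constant set $\{w_1,\dots,w_s\}$ can be chosen. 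The delicate bookkeeping concerns the sub-case where $A \cup B$ has a representative inside $V_{i_z}$ (possibly in $L$), where intra-part edges are scarce; this is handled by further case analysis exploiting the sparsity of $G[V_{i_z}]$ established in Lemma~\ref{largeindependent} and the near-complete bipartite structure between parts.
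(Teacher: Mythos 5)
Your overall strategy — delete $v_0$'s edges, reattach it as a near-copy of $z$, compute the Rayleigh gain, and argue $F$-freeness by replacing $v_0$ inside any alleged copy — is the same as the paper's. The Rayleigh computation is correct: indeed $\mathbf{x}^T A(G^*)\mathbf{x} - \mathbf{x}^T A(G)\mathbf{x} = 2\lambda(G)x_{v_0}(1-x_{v_0}) - 2x_{v_0}^2\mathbf{1}[v_0\sim z]$, and $v_0\in L$ forces $x_{v_0}\le d(v_0)/\lambda(G) < 1-c(r)\epsilon^{1/3}$, giving $\lambda(G^*)>\lambda(G)$ for large $n$. The first case ($z\notin V(F')$) of the $F$-freeness argument is also fine, and replacing $z$ by $v_2$ in the second case is actually unnecessary — $z$'s edges in $F'$ already lie in $G$ because $N_{G^*}(z)\subseteq N_G(z)$, so only $v_0$ needs to move.

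The genuine gap is in the construction of $G^*$ and the resulting $F$-freeness argument in the case $v_0,z\in V(F')$. You set $N_{G^*}(v_0)=N_G(z)\setminus\{v_0\}$, i.e.\ you attach $v_0$ to \emph{all} of $z$'s neighbours. Consequently $A=N_{G^*}(v_0)\cap V(F')$ can contain vertices of $L$ and vertices of $V_{i_z}$ (writing $z\in V_{i_z}$), since at this stage $|N(z)\cap L|$ and $d_{V_{i_z}}(z)$ are only bounded by $\epsilon^{1/3}n+a$, not by a constant. To produce $v_1$ with $A\subseteq N_G(v_1)$ and $v_1\notin V(F')\cup\{z\}$ you need a large common neighbourhood $\bigcap_{w\in A}N_G(w)$, and the inclusion--exclusion estimate of Lemma~\ref{inclusion exclusion lemma} only delivers this when every $w\in A$ has $d_{V_j}(w)\ge n/r - O(\epsilon^{1/3})n$ into a common part $V_j$. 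That bound is supplied by Lemmas~\ref{approximate structure}--\ref{largeindependent} precisely for $w\notin L$ with $w\notin V_j$; but for $w\in A\cap L$ there is \emph{no lower bound at all} on $d(w)$ — recall $L$ is defined by an upper bound on degree, and this lemma is exactly where $L=\varnothing$ is being established, so one cannot presuppose it. In the extreme, a vertex $w\in A\cap L$ could have $N_G(w)=\{z\}$, making $\bigcap_{w\in A}N_G(w)=\{z\}$ and leaving no candidate $v_1$. Your closing sentence acknowledges ``delicate bookkeeping'' for exactly this sub-case and defers it to unspecified further analysis, but I do not see how the sparsity of $G[V_{i_z}]$ or the cross-part structure rescues a vertex of $L$ whose degree may be tiny; the difficulty seems structural rather than a matter of bookkeeping.

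The paper avoids the problem at the source: it sets $E(G')=E(G\setminus\{v\})\cup\{vw:\ w\in N(z)\cap(\cup_{i\ge 2}I_i)\}$, i.e.\ the reattached neighbourhood is restricted to $N(z)\cap\bigl(\bigcup_{i\ge 2}I_i\bigr)$, which lies entirely outside $V_1$ and outside $L$. Then every copy-neighbour $w_i$ of $v$ automatically satisfies $w_i\notin V_1\cup L$, hence $d_{V_1}(w_i)>n/r-6r\epsilon^{1/3}n-a$, and the inclusion--exclusion count for $\bigl|\bigcap_i N_{V_1}(w_i)\setminus L\bigr|$ gives a set of size $\Theta(n)$, from which $v'$ avoiding $V(F')$ is chosen. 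The price is a slightly smaller Rayleigh gain (one only gains the weight carried by $N(z)\cap\cup_{i\ge 2}I_i$, bounded below via~(\ref{Lempty2})), but it is still positive of order $\epsilon^{1/3}n$. To repair your argument you would need to make the analogous restriction on $N_{G^*}(v_0)$; as written, the $F$-freeness claim for $G^*$ is not justified.
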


\begin{proof} We first prove that $L=\varnothing$.
Otherwise, let $v$ be a vertex in $L$. Then $d(v)\le (1- \frac{1}{r}-3r\epsilon^{\frac{1}{3}})n$. Recall that $x_z=\max\{x_i|\ i\in [n]\}$, then $\lambda(G)=\lambda (G) x_z=\sum_{wz\in E(G)}x_w\leq d(z)$. Hence
 $$d(z)\ge \lambda (G)\ge   \left(1- \frac{1}{r} - \frac{r}{4n^2}+\frac{2a}{n^2} \right)n > \left(1-\frac{1}{r}- 3r\epsilon^{\frac{1}{3}} \right)n,$$
 as $n$ is large enough. Hence $z\notin L$. Without loss of generality, we may assume that $z\in V_1$.
Let $G'$ be the graph with $V(G')=V(G)$ and edge set $E(G') = E(G \setminus \{v\}) \cup \{vw|\  w\in N(z)\cap(\cup_{i=2}^{r} I_i)\}$. %Roughly speaking, in this process, the number of added edges is greater than the number of deleted edges.
We claim that $G'$ is $F$-free. Otherwise, $G'$ contains a copy of $F$, denoted  as $F'$, as a subgraph,  then $v\in V(F')$. Let $N_{G'}(v)\cap V(F')= \{w_1,\ldots, w_s\}$. Obviously, $w_i\notin V_1$ and $w_i\notin L$ for any $i\in [s]$. If $z\notin V(F')$,  then $(F'\setminus \{v\})\cup \{z\}$ is a copy of $F$ in $G$, which is a contradiction. Thus $z\in V(F')$. For any $i \in [s]$,
\begin{align*}
d_{V_1}(w_i)
&=d(w_i)-d_{V\setminus V_1}(w_i)\\
&\geq \left(1-\frac{1}{r}- 3r\epsilon^{\frac{1}{3}} \right)n-a-\epsilon^{\frac{1}{3}}n-(r-2)\left(\frac{n}{r}+3\sqrt{\epsilon}n\right)\\
&>\frac{n}{r}-6r\epsilon^{\frac{1}{3}}n-a,
\end{align*}
where the last second inequality holds as $w_i\notin L$ and $e(G[V_j\setminus L])\leq a$ for $w_i\in V_j$.
Using Lemma \ref{inclusion exclusion lemma}, we get
\begin{eqnarray*}
& & \Big|\bigcap_{i=1}^{s}N_{V_1}(w_{i})\setminus L\Big|\\[2mm]
%&\geq & \sum_{i=1}^{s}|(N_{V_1}(w_{i})|-(s-1)\Big|\bigcup_{i=1}^{s}N_{V_1}(w_{i})\Big|- |L|\\[2mm]
&\geq &\sum_{i=1}^{s}d_{V_1}(w_{i})-(s-1)|V_1|- |L|\\[2mm]
&> & s\left(\frac{n}{r}-6r\epsilon^{\frac{1}{3}}n-a\right)-
(s-1)\left(\frac{n}{r}+3\sqrt{\epsilon}n\right)-\epsilon^{\frac{1}{3}}n\\[2mm]
&> & \frac{n}{r}-10sr\epsilon^{\frac{1}{3}}n -sa > 1.
\end{eqnarray*}
Thus there exists $v'\in V_1\setminus L$ such that $v'$ is adjacent to $w_{1},\ldots,w_{s}$. Then $(F'\setminus \{v\})\cup \{v'\}$ is a copy of  $F$ in $G$, which is a contradiction. Thus $G'$ is $F$-free.

   By Lemma \ref{largeindependent}, we have $e(G[V_1\setminus L])\leq a$, then the maximum degree in the induced subgraph $G[V_1\setminus L]$ is at most $a$. Combining this with Lemma~\ref{lem size of W and L}, we get %$|L|\le \epsilon_3 n$ and
 $$d_{V_1}(z)=d_{{V_1}\cap L}(z)+d_{V_1\setminus L}(z)\le \epsilon^{\frac{1}{3}} n + a.$$
 Therefore, by Lemma \ref{largeindependent}, we have
 \begin{eqnarray*}
 \lambda (G)&=&\lambda (G) x_z=\sum_{v\sim z} x_v
 = \sum_{\substack{v\in V_1 , v\sim z}} x_v
 + \sum_{i=2}^{r}\left(\sum_{\substack{v\in V_i  , v\sim z} } x_v\right)\\
 &=& \sum_{\substack{v\in V_1 , v\sim z}} x_v
 + \sum_{i=2}^{r}\left(\sum_{\substack{v\in I_i , v\sim z}} x_v
 + \sum_{\substack{v\in V_i\setminus I_i , v\sim z} } x_v\right)\\
 &\le&  d_{V_1}(z)+\sum_{i=2}^{r}\left(\sum_{\substack{v\in I_i, v\sim z}} x_v\right)
 + \sum_{i=2}^{r}|V_i\setminus I_i |\\
 &\le & \epsilon^{\frac{1}{3}} n + a +\sum_{i=2}^{r}\left(\sum_{\substack{v\in I_i, v\sim z}} x_v\right) + (r-1)(\epsilon^{\frac{1}{3}}n +a) .
\end{eqnarray*}
 By Lemma \ref{lem32}, we have
 \begin{equation}\label{Lempty2}
 \sum_{i=2}^{r}\left(\sum_{\substack{v\in I_i, v\sim z}} x_v\right)
 \ge \left(1- \frac{1}{r}\right)n - \frac{r}{4n}+\frac{2a}{n} - r\epsilon^{\frac{1}{3}} n - ra.
 \end{equation}
By the Rayleigh quotient equation,
\begin{align*}
\lambda(G') - \lambda(G)
&\geq \frac{\mathbf{x}^T\left(A(G')-A(G)\right)\mathbf{x}}{\mathbf{x}^T\mathbf{x}}
 = \frac{2x_v}{\mathbf{x}^T\mathbf{x}}\left(
\sum_{i=2}^{r}\left(\sum_{\substack{w\in I_i, v\sim z}} x_w\right) - \sum_{uv\in E(G)} x_u\right) \\
& \geq\frac{2x_v}{\mathbf{x}^T\mathbf{x}}
\left( \left(1- \frac{1}{r}\right)n - \frac{r}{4n}+\frac{2a}{n} - r\epsilon^{\frac{1}{3}} n - ra - \left(1- \frac{1}{r}-3r\epsilon^{\frac{1}{3}}\right)n \right)>0,
\end{align*}
where the last second inequality holds since  (\ref{Lempty2}) and $\sum_{uv\in E(G)} x_u\leq d(v)$, and the last inequality holds for $n$ large enough. This contradicts the fact that $G$ has the largest spectral radius over all $F$-free graphs, so $L$ must be empty.
Furthermore, by Lemma \ref{largeindependent}, we have $e(G[V_i])\leq a$ for each $i\in [r]$.
\end{proof}

\begin{lemma}\label{Bi}
For any $i\in [r]$, let $B_i=\{u\in V_i|\ d_{V_i}(u)\geq 1\}$ and $C_i=V_i\setminus B_i$. Then

(1) $|B_i|\leq 2a$;

(2) For every vertex $u\in C_i$, $u$ is adjacent to all vertices of $V\setminus V_i$.
\end{lemma}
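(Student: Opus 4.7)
The plan for part (1) is immediate from Lemma \ref{eigenvector entries}: that lemma gives $e(G[V_i])\leq a$, and every element of $B_i$ is an endpoint of some edge of $G[V_i]$, so $|B_i|\leq 2e(G[V_i])\leq 2a$.

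For part (2) I will argue by contradiction via an ``add an edge and substitute'' move, in the same spirit as the swap at the end of the proof of Lemma \ref{eigenvector entries}. Suppose some $u\in C_i$ is not adjacent to some $v\in V_j$ with $j\neq i$, and set $G^{\ast}:=G+uv$. Since $G$ is connected and $G^{\ast}$ is a connected supergraph of $G$, Lemma \ref{subgraph} gives $\lambda(G^{\ast})>\lambda(G)$, so by the extremality of $G$ the graph $G^{\ast}$ must contain a copy $F^{\ast}$ of $F$, and this copy is forced to use the new edge $uv$; in particular $u,v\in V(F^{\ast})$. I then intend to exhibit a copy of $F$ already inside $G$ by replacing $u$ in $F^{\ast}$ by a substitute $u'\in V_i\setminus (V(F^{\ast})\cup\{u\})$ that is adjacent in $G$ to $S:=N_{F^{\ast}}(u)$; then $F^{\ast}-u+u'\subseteq G$, contradicting that $G$ is $F$-free.

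Producing such a $u'$ is the main task, and it follows the same common-neighbours pattern used in Lemmas \ref{lem size of W and L}, \ref{largeindependent} and \ref{eigenvector entries}. Since $u\in C_i$ has no neighbour inside $V_i$ and $v\in V_j$, every $w\in S$ lies in $V\setminus V_i$, and $|S|\leq |V(F)|-1$ is a constant. For each such $w$, the facts $L=\varnothing$ and $e(G[V_k])\leq a$ (where $V_k\ni w$, so $d_{V_k}(w)\leq e(G[V_k])\leq a$) yield
\[
d_{V_i}(w)\;\geq\; d(w)-d_{V_k}(w)-(r-2)\!\left(\tfrac{1}{r}+3\sqrt{\epsilon}\right)n \;>\; \tfrac{n}{r}-6r\epsilon^{1/3}n-a,
\]
and then Lemma \ref{inclusion exclusion lemma} applied to the (at most $|V(F)|-1$) sets $N_{V_i}(w)$ delivers $\bigl|\bigcap_{w\in S}N_{V_i}(w)\bigr|\geq \tfrac{n}{r}-O(\epsilon^{1/3})n$, which for large $n$ exceeds $|V(F^{\ast})|+1$ by a wide margin, so a suitable $u'$ exists.

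There is no genuine obstacle: the argument is a near copy of the common-neighbours-plus-substitute technique that has already appeared three times in Section~3. The only things to double-check are that $G^{\ast}$ is connected (immediate from $G$ being connected) and that $u'$ can be chosen outside $V(F^{\ast})\cup\{u\}$ (guaranteed by the size of the common-neighbour set). Part (1), handled first, is not logically needed to derive part (2); it is included because, combined with (2), it is what makes Lemma \ref{Bi} useful downstream, by certifying that all but $O(1)$ vertices of each $V_i$ are already ``complete'' across the partition.
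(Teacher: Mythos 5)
Your proof is correct and follows essentially the same approach as the paper: for (1), counting endpoints of edges inside $G[V_i]$ via $e(G[V_i])\leq a$, and for (2), adding the missing cross-edge, extracting the forced copy of $F$, and using the common-neighbour bound (via Lemma \ref{inclusion exclusion lemma} together with $L=\varnothing$ and $e(G[V_k])\leq a$) to find a substitute vertex in $C_i$ that yields a copy of $F$ in $G$ itself. The only cosmetic differences are that the paper phrases the swap as ``claim $G'$ is $F$-free, derive a contradiction'' rather than ``$\lambda(G^{\ast})>\lambda(G)$ forces $F\subseteq G^{\ast}$,'' and uses the slightly larger set $N_{G'}(v)\cap V(F')$ in place of your $N_{F^{\ast}}(u)$ — both are sufficient for the substitution.
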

\begin{proof}
We prove the  assertions by contradiction.

(1) If there exists a $j\in [r]$ such that $|B_j|> 2a$, then $\sum_{u\in B_j}d_{V_j}(u)>2a$. On the other hand,  $e(G[V_j])\leq a$ by Lemma \ref{eigenvector entries} . Therefore,
\[
2a< \sum_{u\in B_j}d_{V_j}(u)=\sum_{u\in V_j}d_{V_j}(u)=2e(G[V_j])\leq 2a,
\]
which is a contradiction.

(2) If there exists a vertex $v\in C_{i_0}$ such that there is a vertex $w_1\notin V_{i_0}$ and $vw_{1}\notin E(G)$, where $i_0\in [r]$. Let $G'$ be the graph with $V(G')=V(G)$ and $E(G')=E(G)\cup \{vw_{1}\}$. We claim that $G'$ is $F$-free. Otherwise, $G'$ contains a copy of $F$, denoted as $F'$,  as a subgraph, then $vw_{1}\in E(F')$. Let $N_{G'}(v)\cap V(F')= \{w_1,\ldots, w_s\}$. Obviously, $w_i\notin V_{i_0}$ for any $i\in [s]$, then we have,
\begin{align}
d_{V_{i_0}}(w_i)&=d(w_i)-d_{V\setminus V_{i_0}}(w_i) \nonumber \\
&\geq \left(1-\frac{1}{r}- 3r\epsilon^{\frac{1}{3}} \right)n-a-(r-2)\left(\frac{n}{r}+3\sqrt{\epsilon}n\right)\label{degree}\\
&>\frac{n}{r}-6r\epsilon^{\frac{1}{3}}n-a, \nonumber
\end{align}
where the last second inequality holds as $L=\emptyset$, and $e(G[V_j])\leq a$ for $w_i\in V_j$. Using Lemma \ref{inclusion exclusion lemma}, we consider the common neighbors of $w_1,\ldots, w_s$ in $C_{i_0}$,
\begin{eqnarray*}
& & \Big|\bigcap_{i=1}^{s}N_{V_{i_0}}(w_{i})\setminus B_{i_0}\Big|\\[2mm]
%&\geq & \sum_{i=1}^{s}|(N_{V_1}(w_{i})|-(s-1)\Big|\bigcup_{i=1}^{s}N_{V_1}(w_{i})\Big|- |B_1|\\[2mm]
&\geq &\sum_{i=1}^{s}d_{V_{i_0}}(w_{i})-(s-1)|V_{i_0}|- |B_{i_0}|\\[2mm]
&> & s\left(\frac{n}{r}-6r\epsilon^{\frac{1}{3}}n-a\right)-
(s-1)\left(\frac{n}{r}+3\sqrt{\epsilon}n\right)-2a\\[2mm]
&>& \frac{n}{r}-9rs\epsilon^{\frac{1}{3}}n - (s+2)a > 1.
\end{eqnarray*}
Then there exists $v'\in C_{i_0}$ such that $v'$ is adjacent to $w_{1},\ldots,w_{s}$.
 Then $(F'\setminus \{v\})\cup \{v'\}$ is a copy of  $F$ in $G$, which is a contradiction. Thus $G'$ is $F$-free. From the construction of $G'$, we see that $\lambda(G')>\lambda(G)$, which contradicts the assumption that $G$ has the maximum spectral radius among all $F$-free graphs on $n$ vertices.

\end{proof}

\begin{lemma}\label{eigenvector}
For any $u\in V(G)$,  $x_u\geq 1-\frac{20a^2r^2}{n}$.
\end{lemma}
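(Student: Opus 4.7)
The plan is to bound the eigenvector entries in two stages. First, I observe that all good vertices in a part share a common entry: if $u \in C_i$, then Lemma \ref{Bi}(2) gives $N_G(u) = V \setminus V_i$, so by the eigenvalue equation $\lambda x_u = \sum_{w \notin V_i} x_w = S - S_i$, where $S = \sum_w x_w$ and $S_j = \sum_{w \in V_j} x_w$. Hence every $u \in C_i$ satisfies $x_u = c_i := (S - S_i)/\lambda$, and in particular $S_i - S_j = \lambda(c_j - c_i)$ for all $i, j$. Without loss of generality assume $z \in V_1$, so $x_z = 1$. Applying the eigenvalue equation at $z$ and using $d_{V_1}(z) \leq 2e(G[V_1]) \leq 2a$ from Lemma \ref{eigenvector entries}, one obtains $\lambda = \sum_{w \sim z} x_w \leq (S - S_1) + d_{V_1}(z) \leq \lambda c_1 + 2a$, whence $1 - c_1 \leq 2a/\lambda$.

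Next, for each $i \neq 1$, I use a local swap argument to compare $c_i$ with $c_1$. Fix any $u_0 \in C_i$, possible because $|C_i| \geq |V_i| - 2a \geq n/r - 3\sqrt{\epsilon}\,n - 2a > |V(F)|$ for $n$ large. Form $G'$ by deleting the $|V_1|$ edges from $u_0$ to $V_1$ and adding the $|V_i| - 1$ edges from $u_0$ to $V_i \setminus \{u_0\}$, so that $N_{G'}(u_0) = V \setminus V_1 \setminus \{u_0\}$. The graph $G'$ is $F$-free: any copy of $F$ in $G'$ must contain $u_0$ (otherwise it lies in $G$), and the neighbors of $u_0$ in that copy are contained in $V \setminus V_1$; choosing $w \in C_1$ outside this copy (possible since $|C_1| > |V(F)|$) and noting $N_G(w) = V \setminus V_1$, replacing $u_0$ by $w$ yields a copy of $F$ in $G$, a contradiction. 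Using the Rayleigh quotient with the eigenvector $\mathbf{x}$ and the identity $S_i - S_1 = \lambda(c_1 - c_i)$,
\[\mathbf{x}^T\bigl(A(G') - A(G)\bigr)\mathbf{x} = 2c_i\bigl[(S_i - c_i) - S_1\bigr] = 2c_i\bigl[\lambda c_1 - (\lambda + 1)c_i\bigr].\]
Spectral extremality of $G$ forces this change to be nonpositive, so $c_i \geq \lambda c_1/(\lambda + 1)$, and hence $1 - c_i \leq \bigl(\lambda(1 - c_1) + 1\bigr)/(\lambda + 1) \leq (2a + 1)/\lambda$.

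For a bad vertex $u \in B_i$, I perform the analogous modification: delete the edges from $u$ into $V_i$ and add the edges from $u$ to the vertices of $V \setminus V_i$ not already adjacent to $u$, so that $N_{G''}(u) = V \setminus V_i$. The same replace-by-$w \in C_i$ swap shows $G''$ is $F$-free, and a direct calculation gives $\mathbf{x}^T(A(G'') - A(G))\mathbf{x} = 2\lambda x_u(c_i - x_u)$, so extremality forces $x_u \geq c_i$. Combining, for every $u \in V(G)$ with $u \in V_i$,
\[x_u \geq c_i \geq 1 - \frac{2a + 1}{\lambda},\]
and the lower bound $\lambda \geq (1 - 1/r)n - r/(4n) + 2a/n$ from Lemma \ref{lem32} yields $(2a + 1)/\lambda \leq 20 a^2 r^2/n$ for $n$ large, giving the stated inequality. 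The main obstacle is verifying that $G'$ and $G''$ remain $F$-free; the uniform key idea is that since $|C_i|$ and $|C_1|$ both greatly exceed $|V(F)|$ for large $n$, a replacement good vertex outside any prescribed copy of $F$ always exists.
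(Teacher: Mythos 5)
Your proof is correct, and it takes a genuinely different route from the paper. The paper fixes an arbitrary vertex $v$ with small entry, performs a single wholesale vertex surgery (delete all edges at $v$, reconnect $v$ to $N(z)\cap(\cup_{i\geq 2}C_i)$), verifies $F$-freeness by the same replacement trick you use, and then bounds the resulting Rayleigh quotient using the already-established facts $e(G[V_1])\leq a$ and $|B_i|\leq 2a$. Your approach instead exploits the structural observation (which the paper never states explicitly) that every vertex of $C_i$ has the identical eigenvector entry $c_i=(S-S_i)/\lambda$, reducing the whole problem to controlling $r$ real numbers. You then pin these down in three steps: bound $1-c_1\leq 2a/\lambda$ directly from the eigenvalue equation at $z$; transfer this to the other $c_i$ by a local part-swap of a single $C_i$ vertex and the identity $S_i-S_1=\lambda(c_1-c_i)$; and finally show $x_u\geq c_i$ for $u\in B_i$ by another local surgery whose Rayleigh quotient collapses to $2\lambda x_u(c_i-x_u)$. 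All three $F$-freeness checks are the same ``replace by an unused $C_j$ vertex'' argument, and the common-neighbor counting needed in the paper is avoided entirely because you replace by a vertex that is adjacent to everything outside its part by Lemma~\ref{Bi}(2). The payoff is a cleaner and strictly sharper bound: you obtain $x_u\geq 1-(2a+1)/\lambda\approx 1-\frac{(2a+1)r}{(r-1)n}$, which comfortably implies the stated $1-\frac{20a^2r^2}{n}$. Two minor remarks: $d_{V_1}(z)\leq e(G[V_1])\leq a$ already, so the factor $2$ in your $d_{V_1}(z)\leq 2e(G[V_1])$ is unnecessary (though harmless); and, as with the paper's own proof, the final numerical comparison $(2a+1)/\lambda\leq 20a^2r^2/n$ requires $a\geq 1$ --- for $a=0$ the statement would force a constant Perron vector, which is false for $T_{n,r}$ when $r\nmid n$, so the implicit standing assumption $a\geq 1$ is inherited from the paper and is not a defect you introduced.
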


\begin{proof}

We will prove this lemma by contradiction. Suppose that there is a vertex $v\in V(G)$ with $x_v< 1-\frac{20a^2r^2}{n}$.
Recall that $x_z=\max\{x_i|\ i\in V(G)\}=1$. Without loss of generality, we may assume that $z\in V_1$.
Let $G'$ be the graph with $V(G')=V(G)$ and $E(G')=E(G\setminus \{v\})\cup \{vw|\ w\in N(z)\cap (\cup_{i=2}^{r}C_i)\}$. We claim that
 $G'$ is $F$-free.
Otherwise, $G'$ contains a copy of $F$, denoted by $F'$, as a subgraph, then $v\in V(F')$. Let $N_{G'}(v)\cap V(F')= \{w_1,\ldots, w_s\}$. Obviously, $w_i\notin V_1$ for any $i\in [s]$. If $z\notin V(F')$, then  $(F'\setminus \{v\})\cup \{z\}$ is a copy of $F$ in $G$, which is a contradiction. Thus $z\in V(F')$. By using the similar method as in Lemma \ref{Bi}, we  get
\begin{align*}
d_{V_1}(w_i)>\frac{n}{r}-6r\epsilon^{\frac{1}{3}}n-a,
\end{align*}
 for any $i \in [s]$.
Using Lemma \ref{inclusion exclusion lemma}, we consider the common neighbors of $w_1,\ldots, w_s$ in $C_1$,
\begin{eqnarray*}
& & \Big|\bigcap_{i=1}^{s}N_{V_1}(w_{i})\setminus B_1\Big|\\[2mm]
%&\geq & \sum_{i=1}^{s}|(N_{V_1}(w_{i})|-(s-1)\Big|\bigcup_{i=1}^{s}N_{V_1}(w_{i})\Big|- |B_1|\\[2mm]
&\geq &\sum_{i=1}^{s}d_{V_1}(w_{i})-(s-1)|V_1|- | B_1|\\[2mm]
&> & s\left(\frac{n}{r}-6r\epsilon^{\frac{1}{3}}n-a\right)-
(s-1)\left(\frac{n}{r}+3\sqrt{\epsilon}n\right)-2a\\[2mm]
&>& \frac{n}{r}-9rs\epsilon^{\frac{1}{3}}n - (s+2)a > 1.
\end{eqnarray*}
Then there exists $v'\in C_1$ such that $v'$ is adjacent to $w_{1},\ldots,w_{s}$.
 Then $(F'\setminus \{v\})\cup \{v'\}$ is a copy of  $F$ in $G$, which is a contradiction. Thus $G'$ is $F$-free.

By Lemma \ref{eigenvector entries}, $e(G[V_1])\leq a$, then $d_{V_1}(z)\leq a$. By (\ref{eigenequation}), we have
\begin{align*}
\lambda (G) x_z&=\sum_{w\thicksim z}x_w = \sum_{w\thicksim z,w\in V_1}x_w+\sum_{i=2}^{r}\Big(\sum_{w\thicksim z,w\in V_i}x_w\Big)\\
&= \sum_{w\thicksim z,w\in V_1}x_w+\sum_{i=2}^{r}\Big(\sum_{w\thicksim z,w\in B_i}x_w+\sum_{w\thicksim z,w\in C_i}x_w\Big),
\end{align*}
which implies that
\begin{align}
\sum_{i=2}^{r}\Big(\sum_{w\thicksim z,w\in C_i}x_w\Big)
&=\lambda (G)-\sum_{w\thicksim z,w\in V_1}x_w-\sum_{i=2}^{r}\Big(\sum_{w\thicksim z,w\in B_i}x_w\Big)\nonumber\\
&\geq \lambda (G)-d_{V_1}(z)-\sum_{i=2}^{r}\Big(\sum_{w\in B_i}1\Big)\nonumber\\[2mm]
&\geq \lambda (G) -a-(r-1)2a,\label{G'}\\
&= \lambda (G)-(2r-3)a\nonumber,
\end{align}
where  (\ref{G'}) holds as $e(G[V_1])\leq a$, and $|B_i|\leq 2a$ for any $i\in [r]$.

By Rayleigh quotient equation, we have
\begin{align*}
\lambda(G')-\lambda(G)&\geq \frac{\mathbf{x}^{T}(A(G')-A(G))\mathbf{x}}{\mathbf{x}^T\mathbf{x}}\\[2mm]
&= \frac{2x_v}{\mathbf{x}^T\mathbf{x}}\left(\sum_{i=2}^{r}\Bigl(\sum_{w\thicksim z,w\in C_i}x_w \Bigr)-\sum_{uv\in E(G)}x_u\right)\\[2mm]
&= \frac{2x_v}{\mathbf{x}^T\mathbf{x}}\left(\sum_{i=2}^{r}\Bigl(\sum_{w\thicksim z,w\in C_i}x_w \Bigr)-\lambda (G) x_v\right)\\[2mm]
&>  \frac{2x_v}{\mathbf{x}^T\mathbf{x}}\left(\lambda (G)-(2r-3)a-\lambda (G)\Bigl(1-\frac{20a^2r^2}{n}\Bigr)\right)\\[2mm]
&\geq  \frac{2x_v}{\mathbf{x}^T\mathbf{x}}
\left(\frac{r-1}{r}20a^2r^2-\frac{r}{4n}\frac{20a^2r^2}{n}+\frac{2a}{n}\frac{20a^2r^2}{n}- (2r-3)a\right) >0,
\end{align*}
where the last second inequality holds as  (\ref{G'}), and  the last  inequality follows by
$\lambda (G) \ge \left(1- \frac{1}{r}\right)n - \frac{r}{4n}+\frac{2a}{n}$.
 This contradicts the assumption that $G$ has the maximum spectral radius among all $F$-free graphs on $n$ vertices. Thus $x_u\geq 1-\frac{20a^2r^2}{n}$ for any $u\in V(G)$.

\end{proof}

Let $G_{in}=\cup_{i=1}^{r}G[V_i]$. For any $i\in[r]$, let $|V_i|=n_i$, $K=K_{r}(n_1,n_2,\ldots,n_{r})$ be the complete $r$-partite graph on $V_1,V_2,\ldots,V_{r}$, and $G_{out}$ be the graph with $V(G_{out})=V(G)$ and $E(G_{out})=E(K)\setminus E(G)$.

\begin{lemma}\label{in}
$e(G_{in})-e(G_{out})\leq a.$
\end{lemma}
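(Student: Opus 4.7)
The strategy is to first prove that the partition $V_1,\ldots,V_r$ is \emph{balanced}, i.e.\ $||V_i|-|V_j||\le 1$ for all $i,j\in [r]$. Once balancedness is established, $K$ is isomorphic to $T_{n,r}$, so $e(K)=e(T_{n,r})$; combining with the $F$-freeness bound $e(G)\le e(T_{n,r})+a$ and the decomposition $e(G)=e(K)+e(G_{in})-e(G_{out})$ immediately yields $e(G_{in})-e(G_{out})\le a$.

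To prove balancedness I argue by contradiction: suppose $|V_i|\ge |V_j|+2$ for some $i,j\in[r]$. Choose a vertex $u\in C_i$, which is non-empty since $|C_i|\ge |V_i|-2a\gg 1$ by Lemmas~\ref{approximate structure} and~\ref{Bi}. Form $G'$ on the same vertex set $V(G)$ by modifying only the edges incident to $u$, so that $N_{G'}(u)=V\setminus V_j\setminus\{u\}$; thus $u$'s edges to $V_j$ in $G$ are removed and its edges to $V_i\setminus\{u\}$ are added, while all other edges are unchanged. I first verify that $G'$ is $F$-free by the common-neighbor transfer argument used in the proofs of Lemmas~\ref{Bi} and~\ref{eigenvector}: any copy of $F$ in $G'$ must involve $u$, and since $|C_j|\gg |V(F)|$ one can choose some $v'\in C_j$ avoiding this copy. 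By Lemma~\ref{Bi}(2), $v'$ is adjacent in $G$ to all of $V\setminus V_j$, in particular to the $G'$-neighbors of $u$ used in the copy; replacing $u$ by $v'$ produces a copy of $F$ in $G$, contradicting the $F$-freeness of $G$.

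For the Rayleigh comparison I use that by the eigenvalue equation and Lemma~\ref{Bi}(2), every $v\in C_i$ satisfies $x_v=y_i:=\lambda(G)^{-1}\sum_{w\in V\setminus V_i}x_w$, and similarly for $C_j$. A direct calculation then gives
\[
\mathbf{x}^T\bigl(A(G')-A(G)\bigr)\mathbf{x}=2\,y_i\bigl(\lambda(G)\,y_j-(\lambda(G)+1)\,y_i\bigr).
\]
In the idealized regime $y_i\approx S/(\lambda(G)+|V_i|)$ with $S=\sum_w x_w$ (whose $O(a/n)$ perturbations from the small sets $B_i$ are controlled by Lemma~\ref{eigenvector}), the bracket is a positive multiple of $\lambda(G)(|V_i|-|V_j|-1)-|V_j|$. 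For $|V_i|-|V_j|\ge 2$ this is at least $\lambda(G)-|V_j|$, which is strictly positive: for $r\ge 3$ one has $\lambda(G)\ge (1-1/r)n>|V_j|$, and for $r=2$ one uses $\lambda(G)\ge \lambda(K)\ge \sqrt{|V_i||V_j|}>|V_j|$ (since $|V_i|>|V_j|$). Hence $\lambda(G')>\lambda(G)$, contradicting the spectral extremality of $G$.

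The main obstacle lies in the borderline case $r=2,\,|V_i|-|V_j|=2$, where the main term $\lambda(G)-|V_j|$ is only of order $1$; here one must carefully bound the $O(a/n)$-type perturbations to $y_i$ arising from the sets $B_i$ using the quantitative eigenvector estimate $x_w\ge 1-20a^2r^2/n$ of Lemma~\ref{eigenvector}, to ensure that strict positivity of the Rayleigh change is preserved for $n$ sufficiently large.
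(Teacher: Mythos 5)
Your plan reverses the paper's logical order. The deduction of $e(G_{in})-e(G_{out})\le a$ from balancedness is itself correct, but the paper establishes balancedness (Lemma~\ref{balance}) only \emph{after} and \emph{using} Lemma~\ref{in}, so your route needs an independent proof of balancedness, and the one you sketch does not close. The sign condition driving your Rayleigh step is $\lambda(G)\,y_j-(\lambda(G)+1)\,y_i>0$, which by the eigenvalue equation equals $\sum_{w\in V_i}x_w-\sum_{w\in V_j}x_w-x_u$. The only eigenvector control available is $x_w\ge 1-20a^2r^2/n$ from Lemma~\ref{eigenvector}, which yields the lower bound $(|V_i|-|V_j|-1)-20a^2r^2|V_i|/n$; when $|V_i|-|V_j|=2$ this is $1-20a^2r-O(\sqrt{\epsilon})$, which is negative for every $a\ge 1$ and $r\ge 2$, not only in the $r=2$ borderline you flag. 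Your assertion that the perturbations to $y_i$ from the sets $B_i$ are ``$O(a/n)$'' and hence harmless is not substantiated: the contributions of the up-to-$2a$ vertices in $B_i\cup B_j$ and of the deficit of $\sum_w x_w$ below $n$ are of the same order as the main term, and a crude application of Lemma~\ref{eigenvector} cannot separate them. Closing the gap would require a self-consistent refinement of the eigenvector estimates on $B_i$, $C_i$ and of $\lambda(G)-|V_j|$ that the proposal does not carry out.

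The paper's actual proof of Lemma~\ref{in} avoids eigenvectors entirely and is much shorter: for each $i$ pick an $\ell$-subset $S_i\subseteq V_i$ with $B_i\subseteq S_i$. By Lemma~\ref{Bi}(2) every vertex of $C_i$ is complete to $V\setminus V_i$, so all missing cross-edges and all internal edges lie inside $\cup_i B_i\subseteq\cup_i S_i$, giving $e\bigl(G[\cup_i S_i]\bigr)= e(T_{r\ell,r})+e(G_{in})-e(G_{out})$. If $e(G_{in})-e(G_{out})>a$ this exceeds $\mathrm{ex}(r\ell,F)=e(T_{r\ell,r})+a$ and forces a copy of $F$, a contradiction. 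This purely counting argument is both simpler and more robust than the Rayleigh-quotient perturbation you propose, and it is what then makes the paper's proof of Lemma~\ref{balance} go through.
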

\begin{proof}
Suppose to the contrary that $e(G_{in})-e(G_{out})> a.$ For each $i\in [r]$, let $S_i$ be the vertex set satisfying
$B_i\subseteq S_i\subseteq V_i$ and $|S_i|=\ell$. Let $S=\cup_{i=1}^{r}S_i$, $G'=G[S]$. By Lemma \ref{Bi}, we have
$e(G')\geq  e(T_{r\ell,r})+e(G_{in})-e(G_{out})>e(T_{r\ell,r})+a,$
 which implies that $G'$ contains an $F$, this is a contradiction. So $e(G_{in})-e(G_{out})\leq a.$

\end{proof}

\begin{lemma}\label{balance}
For any $1\leq i<j\leq r$,  $\left|n_i-n_j\right|\leq 1$.
\end{lemma}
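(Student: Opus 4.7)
I argue by contradiction, assuming after relabeling that $n_1 - n_2 \ge 2$. Let $K := K_r(n_1,\ldots,n_r)$ and fix a graph $H \in \mathrm{Ex}(n,F)$, so $H$ is $T_{n,r}$ plus $a$ edges and in particular is $F$-free. Since $G$ attains the maximum spectral radius among $n$-vertex $F$-free graphs, $\lambda(G) \ge \lambda(H)$. I will sandwich $\lambda(G)$ between a $K$-based upper bound and an $H$-based lower bound whose gap becomes positive precisely because of the imbalance $n_1 - n_2 \ge 2$, yielding the contradiction.

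\textbf{The two Rayleigh estimates.} The key identity is $A(G) = A(K) + A(G_{in}) - A(G_{out})$, whose three summands live on disjoint entries. Testing against the Perron eigenvector $\mathbf{x}$ of $G$ (normalized so that $\max x_v = 1$) and using the Rayleigh inequality $\mathbf{x}^T A(K) \mathbf{x} \le \lambda(K)\|\mathbf{x}\|^2$, together with $e(G_{in}) \le ra$ (Lemma~\ref{eigenvector entries}), $e(G_{in}) - e(G_{out}) \le a$ (Lemma~\ref{in}), and $x_u \ge 1 - O(1/n)$ (Lemma~\ref{eigenvector}), I will show
\[
\lambda(G) \le \lambda(K) + \frac{2a}{n} + O(1/n^2).
\]
The $O(1/n^2)$ sharpening uses $e(G_{out}) = O(1)$, which follows from $e(K) \le e(T_{n,r})$ together with the lower bound $e(G) \ge e(T_{n,r}) + a - O(1)$ that one extracts by running Rayleigh in the opposite direction and substituting the $\lambda(G)$-bound of Lemma~\ref{lem32}. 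Symmetrically, because the Perron eigenvector of $T_{n,r}$ is constant on each part and the two constants differ only by $O(1/n)$, testing it against $A(H)$ gives
\[
\lambda(H) \ge \lambda(T_{n,r}) + \frac{2a}{n} - O(1/n^2).
\]

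\textbf{Quantitative rebalancing and the main obstacle.} The final ingredient is a quantitative refinement of Lemma~\ref{rpartitegraph}: whenever $n_1 - n_2 \ge 2$,
\[
\lambda(T_{n,r}) - \lambda(K) \ge \frac{2}{n} - O(1/n^2).
\]
I plan to establish this by implicitly differentiating the characteristic equation \eqref{eq0} in $\lambda$, $n_1$, and $n_2$: a single swap $(n_1, n_2) \mapsto (n_1 - 1, n_2 + 1)$ produces $d\lambda = \Theta(1/n)$ with leading coefficient at least $2$, and Lemma~\ref{rpartitegraph} lets one iterate the swap until reaching $T_{n,r}$, each step contributing strictly. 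Chaining the three estimates yields
\[
\lambda(G) \ge \lambda(H) \ge \lambda(K) + \frac{2(a+1)}{n} - O(1/n^2),
\]
which contradicts the upper bound on $\lambda(G)$ for all sufficiently large $n$. The main obstacle is exactly this perturbation step, since Lemma~\ref{rpartitegraph} only gives a qualitative strict inequality, and the quantitative $2/n$ gap has to beat the error terms that arise from the imperfect normalization of both eigenvectors; the remaining work is essentially bookkeeping with the estimates already built up in Lemmas~\ref{lem32}--\ref{in}.
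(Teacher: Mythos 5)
Your proposal takes essentially the same route as the paper's proof: upper-bound $\lambda(G)$ via the decomposition $A(G) = A(K) + A(G_{in}) - A(G_{out})$ together with the eigenvector estimates of Lemmas~\ref{eigenvector} and~\ref{in}; lower-bound $\lambda(G) \ge \lambda(H)$ by testing the Perron vector of $T_{n,r}$ against $A(H)$ for $H \in \mathrm{Ex}(n,F)$; and force a contradiction through a quantitative gap $\lambda(T_{n,r}) - \lambda(K) = \Omega(1/n)$ when some $n_{i_0} - n_{j_0} \ge 2$. For the last step the paper subtracts the characteristic equations $\sum_i n_i/(\lambda+n_i)=1$ for $K$ and for $K' = K_r(n_1,\ldots,n_{i_0}-1,\ldots,n_{j_0}+1,\ldots,n_r)$, which is exactly the finite-difference form of the implicit differentiation you sketch. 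The paper packages the contradiction as two Claims ($\lambda(T_{n,r})-\lambda(K)\ge c_1/n$ versus $\le c_2/n^2$) instead of chaining the three inequalities directly, and needs only \emph{some} $c_1>0$ rather than your targeted constant $2$, but these are cosmetic differences; the substance is identical.

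One link in your outline does not hold as written: deriving $e(G_{out})=O(1)$ from $e(K)\le e(T_{n,r})$ combined with a Rayleigh-derived lower bound $e(G)\ge e(T_{n,r})+a-O(1)$. Running Rayleigh in that direction gives $2e(G)\ge \lambda(G)\,\mathbf{x}^{\mathrm T}\mathbf{x}$, and with $\mathbf{x}^{\mathrm T}\mathbf{x}\ge n-40a^2r^2$ (from Lemma~\ref{eigenvector}) and $\lambda(G)=\Theta(n)$, the $O(1)$ slack in the norm lower bound becomes an $O(n)$ slack in $e(G)$; you only get $e(G)\ge e(T_{n,r})-O(n)$, which is far from sufficient to pin down $e(G_{out})$. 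Fortunately the bound you want is already available directly and you already cite the relevant lemma: by Lemma~\ref{Bi}, every vertex of $C_i$ is complete to $V\setminus V_i$ and $|B_i|\le 2a$, so $e(G_{out})\le\sum_{1\le i<j\le r}|B_i||B_j|\le\binom{r}{2}(2a)^2$, a constant. Substituting this fixes the step, and with that repair your bookkeeping closes and reproduces the paper's argument.
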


\begin{proof}
 We prove this lemma  by contradiction. %For any $i\in[r]$, let $|V_i|=n_i$ and $K=K_{r}(n_1,n_2,\ldots,n_{r})$ be the complete $r$-partite graph with vertex partition $V_1\cup V_2\cup \ldots \cup V_{r}$.
 Without loss of generality, suppose that $n_1\geq n_2\geq \ldots \geq n_{r}$. Assume that there exist $i_0, j_0$ with $1\leq i_0 < j_0\leq r$ such that $n_{i_0}-n_{j_0}\geq 2$.

\noindent{\bfseries Claim 1.} There exists a constant $c_1>0$ such that $\lambda(T_{n,r})-\lambda(K)\geq \frac{c_1}{n}$.
\begin{proof}
Let $K'=K_r(n_1,\ldots, n_{i_0}-1,\ldots,n_{j_0}+1,\ldots,n_{r}).$ Assume $K'\cong K_r(n'_1,n'_2,\ldots,n'_{r})$, where $n'_1\geq n'_2\geq \ldots \geq n'_{r}$.
By (\ref{eq0}), we have
\begin{equation}\label{C}
1=\sum_{i=1}^{r}\frac{n_i}{\lambda(K)+n_i}=\frac{n_{i_0}}{\lambda(K)+n_{i_0}}+\frac{n_{j_0}}{\lambda(K)+n_{j_0}}+\sum_{i\in [r]\setminus \{i_0,j_0\}}\frac{n_i}{\lambda(K)+n_i},
\end{equation}
and
\begin{equation}\label{C'}
1=\sum_{i=1}^{r}\frac{n'_i}{\lambda(K')+n'_i}=\frac{n_{i_0}-1}{\lambda(K')+n_{i_0}-1}+\frac{n_{j_0}+1}{\lambda(K')+n_{j_0}+1}+\sum_{i\in [r]\setminus\{i_0,j_0\}}\frac{n_i}{\lambda(K')+n_i}.
\end{equation}

Subtracting  (\ref{C'}) from   (\ref{C}), we get
\begin{eqnarray*}
& &\frac{2(n_{i_0}-n_{j_0}-1)\lambda^2(K)+(n_{i_0}+n_{j_0})(n_{i_0}-n_{j_0}-1)\lambda(K)}{(\lambda(K)+n_{i_0}-1)(\lambda(K)+n_{i_0})(\lambda(K)+n_{j_0}+1)(\lambda(K)+n_{j_0})}\\[2mm]
& = & \sum_{i\in [r]\setminus\{i_0,j_0\}}\frac{n_i(\lambda(K')-\lambda(K))}{(\lambda(K)+n_i)(\lambda(K')+n_i)}+\frac{(n_{i_0}-1)(\lambda(K')-\lambda(K))}{(\lambda(K)+n_{i_0}-1)(\lambda(K')+n_{i_0}-1)}\\[2mm]
& &+\frac{(n_{j_0}+1)(\lambda(K')-\lambda(K))}{(\lambda(K)+n_{j_0}+1)(\lambda(K')+n_{j_0}+1)}\\[2mm]
& \leq & \frac{\lambda(K')-\lambda(K)}{\lambda(K)+n'_{r}}\Big(\sum_{i\in [r]\setminus\{i_0,j_0\}}\frac{n_i}{\lambda(K')+n_i}+\frac{n_{i_0}-1}{\lambda(K')+n_{i_0}-1}+\frac{n_{j_0}+1}{\lambda(K')+n_{j_0}+1}\Big)\\[2mm]
& = & \frac{\lambda(K')-\lambda(K)}{\lambda(K)+n'_{r}},
\end{eqnarray*}
where the inequality holds as $n'_{r}\leq \min\{n_1,\ldots,n_{i_0}-1,\ldots,n_{j_0}+1,\ldots,n_{r}\}$, and the last equality holds by  (\ref{C'}). Combining with the assumption $n_{i_0}-n_{j_0}\geq 2$, we obtain
\begin{eqnarray}\label{C-C'}
\frac{2\lambda^2(K)+(n_{i_0}+n_{j_0})\lambda(K)}{(\lambda(K)+n_{i_0}-1)(\lambda(K)+n_{i_0})(\lambda(K)+n_{j_0}+1)(\lambda(K)+n_{j_0})}\leq \frac{\lambda(K')-\lambda(K)}{\lambda(K)+n'_{r}}.
\end{eqnarray}
In view of  the construction of $K$, we see that
$$n-\Big(\frac{n}{r}+3\sqrt{\epsilon} n\Big) \leq \delta(K)\leq \lambda(K)\leq \Delta(K)\leq n-\Big(\frac{n}{r}-3\sqrt{\epsilon} n\Big),$$ thus $\lambda(K)=\Theta(n)$. From  (\ref{C-C'}), it follows  that there exists a constant $c_1>0$ such that $\lambda(K')-\lambda(K)\geq \frac{c_1}{n}$. Therefore, by Lemma \ref{rpartitegraph}, $\lambda(T_{n,r})-\lambda(K)\geq\lambda(K')-\lambda(K)\geq \frac{c_1}{n}$.

\end{proof}

\noindent{\bfseries Claim 2.} There exists a constant $c_2>0$ such that $\lambda(T_{n,r})-\lambda(K)\leq \frac{c_2}{n^2}$.

\begin{proof}
According to the definition of $K$, we have $e(G)=e(G_{in})+e(K)-e(G_{out})$.
By Lemma \ref{Bi}, for any $i\in [r]$, and every vertex $u\in C_i$, $u$ is adjacent to all vertices of $V\setminus V_i$. Thus
 $$e(G_{out})\leq \sum_{1\leq i<j\leq r}|B_i||B_j|\leq \binom{r}{2}(2a)^2\leq 2a^2r^2.$$ Therefore
\begin{align}
\lambda(G)&= \frac{\mathbf{x}^\mathrm{T}A(G)\mathbf{x}}{\mathbf{x}^{\mathrm{T}}\mathbf{x}}\nonumber\\[2mm]
&= \frac{2\sum_{ij\in E(K)}x_ix_j}{\mathbf{x}^{\mathrm{T}}\mathbf{x}}+ \frac{2\sum_{ij\in E(G_{in})}x_ix_j}{\mathbf{x}^{\mathrm{T}}\mathbf{x}}- \frac{2\sum_{ij\in E(G_{out})}x_ix_j}{\mathbf{x}^{\mathrm{T}}\mathbf{x}} \nonumber\\[2mm]
&\leq \lambda(K)+ \frac{2e(G_{in})}{\mathbf{x}^{\mathrm{T}}\mathbf{x}}- \frac{2e(G_{out})(1-\frac{20a^2r^2}{n})^2}{\mathbf{x}^{\mathrm{T}}\mathbf{x}} \nonumber\\[2mm]
&\leq \lambda(K)+\frac{2(e(G_{in})-e(G_{out}))}{\mathbf{x}^{\mathrm{T}}\mathbf{x}}+\frac{e(G_{out})\frac{40a^2r^2}{n}}{\mathbf{x}^{\mathrm{T}}\mathbf{x}}\nonumber\\[2mm]
&\leq \lambda(K)+\frac{2a}{\mathbf{x}^{\mathrm{T}}\mathbf{x}}+\frac{\frac{80a^4r^4}{n}}{\mathbf{x}^{\mathrm{T}}\mathbf{x}},\label{GC}
\end{align}
where  (\ref{GC}) holds by Lemma \ref{in} and $e(G_{out})\leq  2a^2r^2.$

On the other hand, let $\mathbf{y}$ be an eigenvector of $T_{n,r}$ corresponding to $\lambda(T_{n,r})$,
$k=n-r\lfloor\frac{n}{r}\rfloor$.
Since $T_{n,r}$ is a complete $r$-partite graph on $n$ vertices where
each partite set has either $\lfloor\frac{n}{r}\rfloor$ or $\lceil\frac{n}{r}\rceil$ vertices,  we may assume $\mathbf{y}=(\underbrace{y_1,\ldots,y_1}_{k\lceil\frac{n}{r}\rceil},\underbrace{y_2,\ldots,y_2}_{n-k\lceil\frac{n}{r}\rceil})^{\mathrm{T}}$.
Thus by  (\ref{eigenequation}), we have
\begin{eqnarray}
\lambda(T_{n,r})y_1=(r-k)\big\lfloor\frac{n}{r}\big\rfloor y_2+(k-1)\big\lceil\frac{n}{r}\big\rceil y_1,\label{14}
\end{eqnarray}
and \begin{eqnarray}
\lambda(T_{n,r})y_2=(r-k-1)\big\lfloor\frac{n}{r}\big\rfloor y_2+k\big\lceil\frac{n}{r}\big\rceil y_1.\label{15}
\end{eqnarray}
Combining (\ref{14}) and (\ref{15}), we obtain
\[
\Big(\lambda(T_{n,r})+\big\lceil\frac{n}{r}\big\rceil\Big)
y_1=\Big(\lambda(T_{n,r})+\big\lfloor\frac{n}{r}\big\rfloor\Big)
y_2.
\]
Without loss of generality, we assume that $y_2=1$. Then
\[
y_2\geq y_1=\frac{\lambda(T_{n,r})+\lfloor\frac{n}{r}\rfloor}{\lambda(T_{n,r})+\lceil\frac{n}{r}\rceil}\geq 1-\frac{1}{\lambda(T_{n,r})+\lceil\frac{n}{r}\rceil}.
\]
Since $\lambda(T_{n,r})\geq \delta(T_{n,r})\geq  n-\lceil\frac{n}{r}\rceil$,  $y_1\geq 1-\frac{1}{n}$.
Let $H\in \mathrm{Ex}(n,F)$. Then $e(H)=\mathrm{ex}(n,F)=e(T_{n,r})+a$. Therefore
\begin{align}
\lambda(G)&\geq \lambda(H)\nonumber\geq \frac{\mathbf{y}^\mathrm{T}A(H)\mathbf{y}}{\mathbf{y}^{\mathrm{T}}\mathbf{y}}\nonumber\\[2mm]
&\geq \frac{\mathbf{y}^\mathrm{T}A(T_{n,r})\mathbf{y}}{\mathbf{y}^{\mathrm{T}}\mathbf{y}}+\frac{2a}{\mathbf{y}^{\mathrm{T}}\mathbf{y}}\left(1-\frac{1}{n}\right)^2\nonumber\\[2mm]
%&\geq \lambda(T_{n,r})+\frac{2a}{\mathbf{y}^{\mathrm{T}}\mathbf{y}}\left(1-\frac{1}{n}\right)^2\nonumber\\[2mm]
&\geq \lambda(T_{n,r})+\frac{2a}{n}\left(1-\frac{2}{n}\right).\label{Tnr-1}
\end{align}
Combining   (\ref{GC}), (\ref{Tnr-1}) and $\mathbf{x}^{\mathrm{T}}\mathbf{x}\geq n(1-\frac{20a^2r^2}{n})^2\geq n-40a^2r^2$, we get
\begin{eqnarray*}
& &\lambda(T_{n,r})-\lambda(K)\\[2mm]
&\leq & \frac{2a}{\mathbf{x}^{\mathrm{T}}\mathbf{x}}-\frac{2a}{n}+\frac{4a}{n^2}+\frac{\frac{80a^4r^4}{n}}{\mathbf{x}^{\mathrm{T}}\mathbf{x}}\\[2mm]
&\leq & \frac{2a}{n-40a^2r^2}-\frac{2a}{n}+\frac{4a}{n^2}+\frac{\frac{80a^4r^4}{n}}{n-40a^2r^2}\\[2mm]
&\leq & \frac{80a^3r^2}{n(n-40a^2r^2)}+\frac{4a}{n^2}+\frac{80a^4r^4}{n(n-40a^2r^2)}\\[2mm]
&\leq & \frac{c_2}{n^2},
\end{eqnarray*}
where $c_2$ is a positive constant.
\end{proof}
 Combining Claim 1 and Claim 2, we have
 \[
 \frac{c_1}{n}\leq \lambda(T_{n,r})-\lambda(K)\leq \frac{c_2}{n^2},
 \]
 which is a contradiction when $n$ is sufficiently large. Thus $\left|n_i-n_j\right|\leq 1$ for any $1\leq i<j\leq r$.

\end{proof}

\medskip
\noindent{\bfseries Proof of Theorem \ref{main result}.} Now we  prove that $e(G)=\mathrm{ex}(n,F)$. Otherwise, we assume that $e(G)\leq \mathrm{ex}(n,F)-1$. Let $H\in \mathrm{Ex}(n,F)$. Then $|E(H)|=e(T_{n,r})+a$. By Lemma \ref{balance}, we may assume that $V_1\cup \ldots \cup V_{r}$ is a vertex partition of $H$. Let $E_{1}=E(G)\setminus E(H)$, $E_{2}=E(H)\setminus E(G)$,  then $E(H)=(E(G)\cup E_{2})\setminus E_{1}$, and
\[
|E(G)\cap E(H)|+|E_{1}|=e(G)<e(H)=|E(G)\cap E(H)|+|E_{2}|,
\]
which implies that
$|E_{2}|\geq |E_{1}|+1$. Furthermore, by Lemma \ref{Bi}, we have
\begin{eqnarray}
|E_{2}|\leq a+ \sum\limits_{1\leq i<j\leq r}|B_i||B_j|\leq a+\binom{r}{2}(2a)^2\leq 3a^2r^2.\label{eqn3}
\end{eqnarray}
According to  (\ref{Rayleigh}) and (\ref{eqn3}), for sufficiently large $n$, we have
\begin{align*}
\lambda(H)&\geq \frac{\mathbf{x}^{\mathrm{T}}A(H)\mathbf{x}}{\mathbf{x}^{\mathrm{T}}\mathbf{x}}\\[2mm]
&= \frac{\mathbf{x}^{\mathrm{T}}A(G)\mathbf{x}}{\mathbf{x}^{\mathrm{T}}\mathbf{x}}+\frac{2\sum_{ij\in E_{2}}x_ix_j}{\mathbf{x}^{\mathrm{T}}\mathbf{x}}-\frac{2\sum_{ij\in E_{1}}x_ix_j}{\mathbf{x}^{\mathrm{T}}\mathbf{x}}\\[2mm]
&= \lambda(G)+\frac{2}{\mathbf{x}^{\mathrm{T}}\mathbf{x}}\Big(\sum_{ij\in E_{2}}x_ix_j-\sum_{ij\in E_{1}}x_ix_j\Big)\\[2mm]
&\geq  \lambda(G)+\frac{2}{\mathbf{x}^{\mathrm{T}}\mathbf{x}}\Big(|E_{2}|(1-\frac{20a^2r^2}{n})^2- |E_{1}|\Big)\\[2mm]
&\geq  \lambda(G)+\frac{2}{\mathbf{x}^{\mathrm{T}}\mathbf{x}}\Big(|E_{2}|-\frac{40a^2r^2}{n}|E_2|- |E_{1}|\Big)\\[2mm]
&\geq \lambda(G)+\frac{2}{\mathbf{x}^{\mathrm{T}}\mathbf{x}}\Big(1-\frac{40a^2r^2}{n}|E_{2}|\Big)\\[2mm]
&\geq \lambda(G)+\frac{2}{\mathbf{x}^{\mathrm{T}}\mathbf{x}}\Big(1-\frac{40a^2r^2}{n}3a^2r^2\Big)\\[2mm]
&> \lambda(G),
\end{align*}
which contradicts the assumption that $G$ has the maximum spectral radius among all $F$-free graphs on $n$ vertices. Hence $e(G)=\mathrm{ex}(n,F)$.
\qed

%\medskip
%\section*{Acknowledgements}
%We are grateful to  the reviewers for carefully reading and the invaluable comments and suggestions, which improves the presentation greatly. 
\medskip

\end{document}